\documentclass[11pt, reqno, english]{amsart} 
\usepackage[utf8]{inputenc}
\usepackage[T1]{fontenc}
\usepackage{amsmath,amsthm}
\usepackage{amsfonts,amssymb}
\usepackage{url}
\usepackage{mathtools}  
\usepackage[colorlinks=true,urlcolor=blue,linkcolor=red,citecolor=magenta]{hyperref}
\usepackage{enumerate, paralist}

\usepackage{charter}

\usepackage[left=1in,right=1in,top=1in,bottom=1in]{geometry}
\numberwithin{equation}{section}

\theoremstyle{plain}
\newtheorem{theorem}{Theorem}[section]
\newtheorem{lemma}[theorem]{Lemma}
\newtheorem{corollary}[theorem]{Corollary}
\newtheorem{proposition}[theorem]{Proposition}

\theoremstyle{definition}

\newtheorem{remark}[theorem]{Remark}

\newcommand{\R}{\mathbb{R}}
\newcommand{\C}{\mathbb{C}}
\newcommand{\Z}{\mathbb{Z}}
\newcommand{\K}{\mathbb{K}}

\newcommand{\supp}{\mathrm{supp}}

\DeclareMathOperator{\conv}{\mathrm{conv}}

\DeclareMathOperator{\sgn}{\mathrm{sgn}}
\DeclareMathOperator{\avg}{\mathrm{avg}}
\DeclareMathOperator{\dist}{\mathrm{dist}}
\DeclareMathOperator{\tens}{\mathrm{tens}}

\title{Topological Tverberg Theorems for Products of Polytopes}

\author[Simon]{Steven Simon}
\address[SS]{Dept.\ Math.\, Bard College, Annandale-on-Hudson, NY 12504, USA}
\email{ssimon@bard.edu} 

\begin{document} 

\begin{abstract}

The topological Tverberg theorem asserts that if $r$ is a prime power then for any continuous map $f\colon \Delta_{(r-1)(d+1)}\rightarrow \R^d$ from the $(r-1)(d+1)$-dimensional simplex $\Delta_{(r-1)(d+1)}$ to $\R^d$ there exist $r$ pairwise disjoint faces of the simplex whose images have non-empty $r$-fold intersection. By refinement, the same conclusion holds if the simplex is replaced by any polytope of the same dimension. While this dimension is tight for simplices, recent work of Sober\'on and Zerbib shows that this need not be true for polytopes in general. 

Here we give topological Tverberg theorems for products of simplices. Each of these improves upon the $(r-1)(d+1)$-dimensional threshold, even while imposing the structural condition that the ``Tverberg faces'' of the product are themselves the products of pairwise disjoint faces from each simplex factor. As before, refinement extends these results, and in particular their dimensional improvements, to products of arbitrary polytopes. As an example, if $d+1$ is a power of two then whenever $m\geq n\geq d+1$ and $m+n=3d+2$ we show that any continuous map $f\colon \Delta_m\times \Delta_n\rightarrow \R^d$ admits disjoint faces $\sigma_1,\sigma_2$ of $\Delta_m$ and $\tau_1,\tau_2$ of $\Delta_n$ such that $\cap_{i,j\in[2]} f(\sigma_i\times\tau_j)\neq \emptyset$. In the case of multilinear maps, our results imply partitions of grid-indexed point sets in $\R^d$ by specialized subsets with stronger intersection conclusions than given by Tverberg's original theorem. Lastly, we extend our results to van Kampen--Flores type theorems which impose dimensional restrictions on the faces of each product factor. 
\end{abstract}

\maketitle

\section{Introduction and Statement of Results}
\label{sec:intro}

\subsection{Topological Tverberg Theorems for Polytopes}
A celebrated theorem of Tverberg~\cite{Tv66} from 1966 asserts that any set of at least $(r-1)(d+1)+1$ points in $\R^d$ admits a partition by $r$ pairwise disjoint subsets whose convex hulls completely overlap. The result is a cornerstone of discrete geometry, and has given rise to a wealth of profound extensions and variants over the course of the past sixty years (see, e.g., the recent survey~\cite{BaSo19} or ~\cite{Ma08}). Particularly well-known among these is a topological generalization, which begins by reformulating Tverberg's theorem as the statement that for any linear map from the $(r-1)(d+1)$-dimensional simplex $\Delta_{(r-1)(d+1)}$ to $\R^d$ there exist $r$ pairwise disjoint faces of the simplex whose images completely overlap. The topological Tverberg of Oz\"aydin~\cite{Oz87} and Volovikov~\cite{Vo96} shows that one may replace linear maps with arbitrary continuous ones whenever $r$ is a prime power, a condition on $r$ which was first shown to be necessary by Frick~\cite{Fr15}.

\begin{theorem}
\label{thm:top Tverberg} 
Let $d\geq 1$ be an integer and let $r\geq 2$ be a prime power. Then for any continuous map $f\colon \Delta_{(r-1)(d+1)}\rightarrow \R^d$ there exists $r$ pairwise disjoint faces $\sigma_1,\ldots, \sigma_r$ of $\Delta_{(r-1)(d+1)}$ such that $\cap_{i=1}^r f(\sigma_i)\neq \emptyset$.
\end{theorem}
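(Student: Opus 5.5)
The plan is the standard configuration space/test map scheme, with the prime power hypothesis entering through Volovikov's fixed-point-free index argument for $p$-tori. Write $r=p^k$, $N=(r-1)(d+1)$, and let $G=(\Z/p)^k$ act freely and transitively on $[r]$, so that $G\subseteq \mathfrak{S}_r$.

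\textbf{Configuration space.} Take the $r$-fold $2$-wise deleted join
\[
K=(\Delta_N)^{*r}_{\Delta(2)},
\]
which is the simplicial complex of formal combinations $\sum_i \lambda_i x_i$ with $x_i\in\sigma_i$, where $\sigma_1,\ldots,\sigma_r$ are pairwise disjoint faces. It is the join of $N+1$ copies of the discrete $r$-point set $[r]$. Hence $K$ is an $N$-dimensional, $(N-1)$-connected complex on which $\mathfrak{S}_r$, and so $G$, acts freely by permuting the factors.

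\textbf{Test map.} Suppose no Tverberg partition exists. Define
\[
F\colon K\to (\R^{d+1})^r,\qquad \sum\lambda_i x_i\mapsto (\lambda_i,\lambda_i f(x_i))_{i=1}^r .
\]
$F$ is $G$-equivariant. It misses the diagonal $D=\{(y,\ldots,y)\}$: a point of $D$ forces all $\lambda_i=1/r$, and then $f(x_1)=\cdots=f(x_r)$, which is a Tverberg point.

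Let $W_r=\{(y_i)\in(\R^{d+1})^r:\sum y_i=0\}$, the orthogonal complement of $D$. It is a real $G$-representation of dimension $(r-1)(d+1)=N$ with $W_r^G=0$. Projecting $F$ to $W_r$ and normalizing gives a $G$-map
\[
K\to S(W_r)\simeq S^{N-1}.
\]

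\textbf{Obstruction.} It remains to show that no $G$-map from an $(N-1)$-connected free $G$-space to $S^{N-1}$ exists, where the sphere is fixed-point free for the $p$-torus. This is Volovikov's lemma. In the Fadell--Husseini index with $\mathbb{F}_p$ coefficients, $\mathrm{Ind}_G(S(W_r))$ contains a nonzero class of degree $N$, namely the Euler class of $W_r$. It is nonzero because $W_r$ splits into nontrivial irreducibles with nonzero Euler classes in $H^*(BG;\mathbb{F}_p)$, and that ring has no zero divisors on the relevant polynomial part. On the other hand, $\mathrm{Ind}_G(K)$ vanishes in degrees $\le N$, since $K$ is $(N-1)$-connected and $N$-dimensional; a Serre spectral sequence argument shows that $H^j(BG)\to H^j_G(K)$ is injective for $j\le N$. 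Monotonicity of the index under $G$-maps then gives the contradiction.

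The main obstacle is this index computation, in particular the odd $p$ case, where one must use the polynomial generators in $H^*(BG;\mathbb{F}_p)$. Alternatively one could avoid it by citing Volovikov's theorem directly: for a $p$-torus, no $G$-map exists from an $n$-acyclic space to a fixed-point free $n$-dimensional homology sphere ($\mathbb{F}_p$-coefficients). For $r$ prime one can shortcut with a degree or Dold-type argument.
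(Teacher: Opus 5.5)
Your proof is correct, but it takes a different (equally standard) route from the one the paper relies on. The paper does not reprove Theorem~\ref{thm:top Tverberg}; it cites Oz\"aydin and Volovikov and, in Section~\ref{sec:CS/TM}, sketches the deleted-product scheme (with $r=p^\ell$). That scheme's cohomological core is the single-factor case of the characteristic-class argument in Section~\ref{sec:B-U Proof}:
\begin{compactenum}[(i)]
\item the configuration space $(\Delta_N)^{\times r}_\Delta$ is $d(r-1)$-dimensional and $(d(r-1)-1)$-connected;
\item the test map has no weights to record, so it lands in $U(\Z_p^\ell,d)$, of the same dimension $d(r-1)$;
\item a zero is forced by mapping the $d(r-1)$-skeleton of $E\Z_p^\ell$ into the configuration space. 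The top Stiefel--Whitney (resp.\ mod $p$ Chern) class of the Borel bundle is a product of nonzero linear forms in the polynomial part of $H^*(B\Z_p^\ell;\Z_p)$, and it survives restriction to that skeleton.
\end{compactenum}

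You instead take the $2$-wise deleted join $[r]^{*(N+1)}$ and record the weights $\lambda_i$. Your target is therefore $W_r\cong U(\Z_p^\ell,d+1)$, of dimension $N=(r-1)(d+1)$, and you phrase the obstruction through monotonicity of the Fadell--Husseini index. The Euler-class computation is the same in both arguments.

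Your route buys an elementary connectivity statement, since the configuration space is a join of $N+1$ nonempty discrete sets. By contrast, the $(n-r)$-connectivity of the deleted product is the nontrivial B\'ar\'any--Shlosman--Sz\H{u}cs result. The paper's route buys clean multiplicativity in the product setting, where $\sum_j(n_j-r_j+1)$ matches $\dim U(\Z_p^\ell,d)$ exactly. When the paper does pass to deleted joins in Section~\ref{sec:join}, it has to append the summands $V_m$ to absorb the weights.

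One small slip: $\mathfrak{S}_r$ does not act freely on $K$ once $r\geq 3$, because a point with $\lambda_1=1$ is fixed by every permutation fixing $1$. Only subgroups acting freely on $[r]$, such as your regular $G$, act freely on $K$, and that is all your argument uses.
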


As observed in~\cite{HKTT23}, a result of Gr\"unbaum~\cite{Gr03} that any (convex) polytope is a refinement of a simplex of the same dimension easily implies an extension of the topological Tverberg theorem to arbitrary polytopes $P$ of dimension $(r-1)(d+1)$. Namely, one has a homeomorphism $h\colon P\rightarrow \Delta_{(r-1)(d+1)}$ such that for each face $\sigma$ of $\Delta_{(r-1)(d+1)}$ the pullback $h^{-1}(\sigma)$ is the union of faces of $P$. Composing $h^{-1}$ with any continuous map $f\colon P\rightarrow \R^d$ and applying Theorem~\ref{thm:top Tverberg} thereby gives $r$ pairwise disjoint faces of $P$ whose images under $f$ completely overlap. 

\begin{corollary} 
\label{cor:polytope Tverberg}
Let $d\geq 1$ be an integer, let $r\geq 2$ be a prime power, and let $P$ be a polytope of dimension $(r-1)(d+1)$. Then for any continuous map $f\colon P\rightarrow \R^d$ there exists $r$ pairwise disjoint faces $\sigma_1,\ldots, \sigma_r$ of $P$ such that $\cap_{i=1}^r f(\sigma_i)\neq \emptyset$.
\end{corollary}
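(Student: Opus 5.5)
The argument is a reduction to Theorem~\ref{thm:top Tverberg} via Grünbaum's refinement result. Set $N=(r-1)(d+1)$ and let $P$ be a polytope of dimension $N$. By Grünbaum's result there is a homeomorphism $h\colon P\rightarrow \Delta_N$ such that for every face $\sigma$ of $\Delta_N$ the preimage $h^{-1}(\sigma)$ is a union of faces of $P$.

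Given a continuous map $f\colon P\rightarrow \R^d$, I will consider $g=f\circ h^{-1}\colon \Delta_N\rightarrow \R^d$, which is continuous. Theorem~\ref{thm:top Tverberg} then gives pairwise disjoint faces $\tau_1,\ldots,\tau_r$ of $\Delta_N$ and a point $x\in\bigcap_{i=1}^r g(\tau_i)$. For each $i$, choose $y_i\in\tau_i$ with $g(y_i)=x$, and set $p_i=h^{-1}(y_i)\in h^{-1}(\tau_i)$. Since $h^{-1}(\tau_i)$ is a union of faces of $P$, some face $\sigma_i$ of $P$ satisfies $p_i\in\sigma_i\subseteq h^{-1}(\tau_i)$. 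Then $f(p_i)=x$ for every $i$, so $x\in\bigcap_i f(\sigma_i)$.

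It remains to show that the $\sigma_i$ are pairwise disjoint. Because $h$ is a bijection and the $\tau_i$ are pairwise disjoint, the sets $h^{-1}(\tau_i)$ are pairwise disjoint, and hence so are their subsets $\sigma_i$.

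The only step needing care is the precise form of the refinement statement: the homeomorphism must carry each face of the simplex to a union of faces of $P$. This is exactly the form quoted from~\cite{Gr03} and~\cite{HKTT23}, so I will take it as given. Everything else in the argument is formal.
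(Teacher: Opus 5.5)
Your proposal is correct and follows essentially the same argument as the paper: compose $f$ with $h^{-1}$ from Grünbaum's refinement and apply Theorem~\ref{thm:top Tverberg}. You additionally spell out the routine details the paper leaves implicit, namely choosing a face $\sigma_i\subseteq h^{-1}(\tau_i)$ through each preimage point and deducing pairwise disjointness from the injectivity of $h$.
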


 We shall call $\sigma_i$ such as those from Corollary~\ref{cor:polytope Tverberg} \emph{Tverberg faces} and the set $\{\sigma_1,\ldots, \sigma_r\}$ of such faces an \emph{$r$-Tverberg family}.

The dimension $t(r,d):=(r-1)(d+1)$ of the simplex of the topological Theorem cannot be lowered for given $r$ and $d$, even for generic linear maps (see, e.g.,~\cite{PS14}). Nonetheless, one may ask for dimensional improvements of Corollary~\ref{cor:polytope Tverberg} in general -- that is, for given $r$ and $d$, to seek polytopes $P$ with $\dim P<t(r,d)$ such that any continuous map $f\colon P\rightarrow \R^d$ still admits an $r$-Tverberg family. Results along these lines were recently given in ~\cite{SoZe26} when $r$ is prime for certain classes of polytopes, including cross-polytopes and polytopes with small face diameter, as well as in the linear setting for arbitrary integers $r\geq 3$ in the case of cyclic polytopes and neighborly polytopes more generally.

\subsection{Topological Tverberg Theorems for Products} This paper presents topological Tverberg theorems for products of polytopes. Each of these improves upon the $t(r,d)$-dimensional Tverberg threshold, even while guaranteeing the structural condition that the Tverberg faces of the product are the product of pairwise disjoint faces from each polytope factor. When restricted to multilinear maps defined on products of simplices, our results yield Tverberg partitions of grid-indexed point sets in $\R^d$ by subsets with greater structure and stronger intersection conclusions than those guaranteed Tverberg's original theorem (see Corollary~\ref{cor:multilinear}). In Remark~\ref{rem:alternative} we give a comparison of our results to an alternative recent product Tverberg theorem given in ~\cite{HMMZ26}.  

Let $p\geq 2$ be a prime number and let $i_1\leq \cdots \leq i_k$ be a non-trivial partition of an integer $\ell\geq 2$. Given polytopes $P_1,\ldots, P_k$, let $P=P_1\times\cdots\times P_k$ be their product and let $f\colon P\rightarrow \R^d$ be a continuous function. Supposing that for each $j\in [k]$ one has a family $\{\sigma_{m_j}^j\mid m_j\in [p^{i_j}]\}$ of $p^{i_j}$ pairwise disjoint faces of $P_j$, the resulting family \[\{\sigma_m:=\sigma_{m_1}^1\times\cdots \times \sigma_{m_k}^k\mid m=(m_1,\ldots, m_k)\in [p^{i_1}]\times\cdots\times [p^{i_k}]\}\] consists in particular of $p^\ell$ pairwise disjoint faces of $P$, each of which lies in the boundary product $\partial P_1\times\cdots \times \partial P_k$. The $\sigma_m$ will be said to form a \emph{$p^\ell$-Tverberg family of type $(p^{i_1},\ldots, p^{i_k})$} provided the $p^\ell$-fold intersection of the images of the $\sigma_m$ is non-empty. 

We shall say that a $k$-tuple $(n_1,\ldots, n_k)$ is \emph{$(p^\ell,d;p^{i_1},\ldots, p^{i_k})$-Tverberg admissible} provided that any continuous map $f\colon \Delta_{n_1}\times \cdots \times \Delta_{n_k}\rightarrow \R^d$ admits a $p^\ell$-Tverberg family of type $(p^{i_1},\ldots, p^{i_k})$. As with Corollary~\ref{cor:polytope Tverberg}, taking products of refinements then implies that one has  $p^\ell$-Tverberg families of type $(p^{i_1},\ldots, p^{i_k})$ for any continuous map $f\colon P_1\times\cdots\times P_k \rightarrow \R^d$, where each $P_j$ is any polytope of dimension $n_j$.

The sharpness of the topological Theorem~\ref{thm:top Tverberg} immediately implies that for each $j\in [k]$ one must have $n_j\geq (p^{i_j}-1)(d+1)$ for any $(p^\ell,d;p^{i_1},\ldots, p^{i_k})$-Tverberg admissible $k$-tuple $(n_1,\ldots, n_k)$.  As discussed in Remark~\ref{rem:dimension} below, elementary obstruction theory imposes dimensional requirements on the equivariant framework underlying the existence of such Tverberg families. These lead to an expectation that the $n_j$ must also satisfy the sum condition  \[\sum_{j=1}^k n_j\geq t(p^\ell, d; p^{i_1},\ldots, p^{i_k}):=d(p^\ell-1)+\sum_{j=1}^k (p^{i_j}-1). \] It is easily verified that $t(p^\ell,d;p^{i_1},\ldots, p^{i_k})<t(p^\ell,d)$ in all cases. 

In what follows, we give Tverberg admissible $k$-tuples $(n_1,\ldots, n_k)$ satisfying the sum condition above, and so in particular one has dimensionally improved topological Tverberg theorems for products $\Delta_{n_1}\times\cdots\times\Delta_{n_k}$ of $n_j$-dimensional simplices. Owing to the obstruction considerations mentioned before, we expect that these results are dimensionally tight. As with Theorem~\ref{thm:top Tverberg}, by refinement each of these results extends to a corresponding dimensionally improved topological Tverberg theorem for products $P=P_1\times\cdots\times P_k$ of arbitrary $n_j$-dimensional polytopes $P_j$. 

\subsection{Statement of Results} 

 Our main theorem guarantees admissible Tverberg $k$-tuples for arbitrary dimensions $d$ and all non-trivial partitions $1\leq i_1\leq \cdots \leq i_k$ of any integer $\ell\geq 2$.

\begin{theorem}
\label{thm:main}
Let $d\geq 1$ be an integer, let $p\geq 2$ be a prime number, and let $1\leq i_1\leq \cdots \leq i_k$ be a non-trivial partition of an integer $\ell\geq 2$. Suppose that $C_1,\ldots, C_k$ is a partition of $[\ell]$ where each $C_j$ has size $i_j$, and for each $j\in[k]$ let $n_j=d(p-1)\cdot\sum_{a\in C_j} p^{a-1}+p^{i_j}-1$.  Then any continuous function $f\colon \Delta_{n_1}\times\cdots\times \Delta_{n_k}\rightarrow \R^d$ admits a $p^\ell$-Tverberg family of type $(p^{i_1},\ldots, p^{i_k})$. 
\end{theorem}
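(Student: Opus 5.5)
We use the configuration space / test map scheme with the elementary abelian group $G=(\Z_p)^\ell$. Identify $[p^{i_j}]$ with $(\Z_p)^{C_j}$, so that $G=\prod_j (\Z_p)^{C_j}$ acts on the index set $[p^{i_1}]\times\cdots\times[p^{i_k}]$ freely and transitively. For each $j$, the $p^{i_j}$-fold $p$-wise deleted join $K_j=(\Delta_{n_j})^{*p^{i_j}}_{\Delta(2)}$ is a free $(\Z_p)^{C_j}$-space. It is $(n_j-1)$-connected; indeed it is the $(n_j+1)$-fold join of the discrete set $[p^{i_j}]$. A point of $K_j$ is a formal convex combination $\sum_{m_j}\lambda^j_{m_j}x^j_{m_j}$ with the $x^j_{m_j}$ lying in pairwise disjoint faces. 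Given $f$, we define a $G$-map
\[
F\colon K_1\times\cdots\times K_k\to (\R^{d+1})^{p^\ell},\qquad F=\bigl(\lambda_m,\ \lambda_m f(x^1_{m_1},\ldots,x^k_{m_k})\bigr)_m ,
\]
where $\lambda_m=\prod_j\lambda^j_{m_j}$. Here we use the product of the joins rather than the join of products, which is exactly what forces the faces to have the product structure.

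The next step is to arrange that a zero of a suitable reduced map gives a Tverberg family. We project $F$ to the complement of the diagonal, i.e.\ onto $W\oplus W_d$, where $W_d=(\R^d)^{p^\ell}/\text{diag}$ and $W$ is the $(p^\ell-1)$-dimensional space of the weight coordinates modulo the diagonal. If the projection vanishes, then all $\lambda_m$ are equal, hence positive, so every $\lambda^j_{m_j}>0$, and all the values $f(x_m)$ coincide. This yields the desired family. The weight constraint can be weakened: it suffices to force each marginal $\lambda^j$ to be constant, which requires only $\sum_j (p^{i_j}-1)$ constraints rather than $p^\ell-1$. So we use instead the map to $V=\bigoplus_j W^{(j)}\oplus W_d$, where $W^{(j)}=\R^{p^{i_j}}/\text{diag}$. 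Equal marginals make all $\lambda_m$ equal and positive. The dimension of $V$ is $\sum_j(p^{i_j}-1)+d(p^\ell-1)$, matching $t(p^\ell,d;\ldots)$. The sphere $S(V)$ has no $G$-fixed points, since each summand is a sum of nontrivial irreducible $G$-representations.

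It remains to show that no $G$-map $K_1\times\cdots\times K_k\to S(V)$ exists, and we expect this to be the main obstacle. The plan is to use the Fadell--Husseini ideal-valued index in $H^*(BG;\mathbb F_p)$, or equivalently the Euler class. Since $K_j$ is $(n_j-1)$-connected and $(\Z_p)^{C_j}$ acts freely, the index of $K_j$ is contained in the part of degree at least $n_j+1$ of $H^*(B(\Z_p)^{C_j})$. The index of the product contains the ideal generated by these in the respective tensor factors. Nonexistence follows if the Euler class $e(V)=\prod_j e(W^{(j)})\cdot e(W_d)$ lies outside this ideal. Up to units, $e(W_d)=\bigl(\prod_{0\neq\alpha\in (\Z_p^\ell)^*} \alpha\bigr)^{d}$ (taking half the product when $p$ is odd, with the Bockstein conventions). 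This is a power of the Dickson invariant $c_{\ell,0}$, and $e(W^{(j)})$ is the top Dickson invariant in the variables of $C_j$.

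The key computation expands $c_{\ell,0}^{d}$ as a polynomial in the variables $t_1,\ldots,t_\ell$. We seek a monomial in which the total degree coming from the variables in $C_j$ is exactly $2(n_j)+\ldots$ below the threshold, i.e.\ at most $n_j$ in the appropriate grading, for every $j$ simultaneously, after multiplying by $\prod_j e(W^{(j)})$. The Dickson polynomial $\prod_\alpha\alpha$ contains, with coefficient $\pm1$, the monomial $\prod_{a=1}^\ell t_a^{(p-1)p^{a-1}}$ (the leading term of the Moore determinant). Its $d$-th power then yields degree $d(p-1)\sum_{a\in C_j}p^{a-1}$ in the $C_j$ variables. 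Adding the degree $p^{i_j}-1$ from $e(W^{(j)})$ gives precisely $n_j$. To finish, we must check that this extremal monomial survives with nonzero coefficient mod $p$ after raising to the $d$-th power and multiplying. We would do this by a lexicographic-leading-term argument, with the ordering adapted to the chosen partition $C_1,\ldots,C_k$. This verification is the delicate point.
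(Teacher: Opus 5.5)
Your argument is sound in outline, and it follows a different route from the paper's.

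\textbf{Comparison with the paper.} The paper uses the product of deleted products $(\Delta_{n_j})^{\times p^{i_j}}_\Delta$, with the bare target $U(\Z_p^\ell,d)$. These spaces are $N_j=(n_j-p^{i_j}+1)$-dimensional and $(N_j-1)$-connected, which is a cited, nontrivial fact. You instead use products of pairwise deleted joins $[p^{i_j}]^{*(n_j+1)}$, whose connectivity is elementary. You pay for this with the marginal-weight summands $W^{(j)}$. Your observation that only $\sum_j(p^{i_j}-1)$ weight constraints are needed is exactly what makes the dimensions match. The paper uses the same join-plus-weights device in Section~\ref{sec:join} for the $D_8$ case.

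The two obstruction criteria are the same. Each $e(W^{(j)})$ is a nonzero form in the $C_j$-variables alone, of degree $p^{i_j}-1$. So in the domain $\Z_p[t_1,\ldots,t_\ell]$, with degrees measured cohomologically, the component of $e(V)$ of block degree $(n_1,\ldots,n_k)$ equals $\prod_j e(W^{(j)})$ times the component of $e(W_d)$ of block degree $(N_1,\ldots,N_k)$. That is precisely the coefficient condition of Proposition~\ref{prop:B-U}.

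\textbf{The index inclusion is stated in the wrong direction.} You say the index of $K_1\times\cdots\times K_k$ contains the ideal $I$ generated by the $H^{\geq n_j+1}(B(\Z_p)^{C_j})$. That is just monotonicity, and from it $e(V)\notin I$ implies nothing. You need the index to be contained in $I$; in fact it equals $I$. This follows from three facts:
\begin{itemize}
\item the K\"unneth isomorphism $H^*(K/G;\Z_p)\cong\bigotimes_j H^*(K_j/(\Z_p)^{C_j};\Z_p)$;
\item the injectivity of $H^i(B(\Z_p)^{C_j})\to H^i(K_j/(\Z_p)^{C_j})$ for $i\leq n_j$;
\item the fact that the kernel of $\bigotimes_j\phi_j$ is $\sum_j V_1\otimes\cdots\otimes\ker\phi_j\otimes\cdots\otimes V_k$.
\end{itemize}
This is the tensor-injectivity step of Section~\ref{sec:B-U Proof}. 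It is the main topological input, so it must be stated the right way round.

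\textbf{The ``delicate point'' is immediate.} Use the lex order $t_\ell>\cdots>t_1$. No adaptation to the $C_j$ is needed, because your variable blocks are the $C_j$ themselves. The leading monomial of the Moore determinant is $\prod_a t_a^{p^{a-1}}$ with coefficient $\pm1$. Leading monomials and leading coefficients multiply over a field. Hence $e(V)$ has leading monomial $\prod_j \mathrm{LM}(e(W^{(j)}))\cdot\prod_a t_a^{c\,p^{a-1}}$ with nonzero coefficient. Here $c=d$ if $p=2$ and $c=d(p-1)/2$ if $p$ is odd; the constants coming from the $A_k$ are units. The $C_j$-block degree of this monomial is exactly $n_j$.

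A minor correction: for odd $p$, $e(W_d)$ is a power of the Moore determinant, not in general a power of $c_{\ell,0}$.
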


As $(p-1)\cdot \sum_{j=0}^{\ell-1} p^j=p^\ell-1$, one has $\sum_{j=1}^k n_j=t(p^\ell,d;p^{i_1},\ldots, p^{i_k})$ in Theorem~\ref{thm:main}. Owing to the uniqueness of $p$-ary expansions, the $k$-tuples corresponding to the various $C_1,\ldots, C_k$ are all distinct, so one has at least $\binom{\ell}{i_1,\ldots, i_k}$ Tverberg admissible $k$-tuples for each non-trivial partition of $\ell$. 

As an example of Theorem~\ref{thm:main}, consider the partition $i_1=1$ and $i_2=2$ of $\ell=3$. Letting $C_1=\{i\}$ and $C_2=[3]\setminus\{i\}$ for all $i\in [3]$, Theorem~\ref{thm:main} guarantees an $8$-Tverberg family of type $(2,4)$ for any continuous map $f\colon  \Delta_{n_1}\times\Delta_{n_2}\rightarrow \R^3$ with $(n_1,n_2)\in \{(4,21), (7,18), (13,12)\}$. Here $t(8,3;2,4)=25$ can be compared to $t(8,3)=28$.\\ 

 Owing to the equivariant cohomological machinery underlying our methods, the Tverberg admissible $k$-tuples we give outside of those guaranteed by Theorem~\ref{thm:main} depend on number-theoretic conditions on the dimension $d$. This is a general phenomenon for results in topological combinatorics, for instance as seen in measure equipartition theory. In fact, when $p=2$ our methods are closely related to those~\cite{BFHZ16,BFHZ18, MLVZ06} used in perhaps the most central problem in this area, the Gr\"unbaum--Hadwiger--Ramos equipartition problem~\cite{Gr60, Ha66, Ra96}, which asks for the minimum dimension $m:=\Delta(d,\ell)$ such that any $d$ masses in $\R^m$ can be equipartitioned by $\ell$ affinely independent hyperplanes. Our results for odd primes are similarly associated with a complex variant of that problem for regular fans~\cite{Si15}. 
 
When $p=2$ we give a recursive formula for admissible Tverberg $k$-tuples for arbitrary integer partitions $1\leq i_1\leq\cdots\leq i_k$ of $\ell$ depending on the decomposition of $d$ in the form $d=2^{s_1}+t_1$, where $s_1$ and $t_1$ are integers satisfying $s_1\geq 0$ and $0\leq t_1<2^{s_1}$. We recursively obtain numbers $s_2,\ldots, s_{\ell-1}$, $t_2,\ldots, t_{\ell-1}$ as follows. For $2\leq j \leq \ell-1$, given $s_{j-1}\geq 0$ and $0\leq t_{j-1}<2^{s_{j-1}}$ we define $s_j\geq0$ and $0\leq t_j<2^{s_j}$ by the equation $2^{s_{j-1}}+2t_{j-1}=2^{s_j}+t_j$. We then define $u_1,\ldots, u_\ell$ by \[u_1=2^{s_1+\ell-1}+t_1, u_2=2^{s_2+\ell-2}+t_2,\ldots, u_{\ell-1}=2^{s_{\ell-1}+1}+t_{\ell-1},\,\, \text{and} \,\, u_\ell=2^{s_{\ell-1}}+2t_{\ell-1}.\] It should be observed that $\Delta(2^{s_1}+t_1,\ell)\leq u_1$ is the current best general upper bound~\cite{MLVZ06, BCCD23} to the Gr\"unbaum--Hadwiger--Ramos hyperplane equipartition problem above.

\begin{theorem} 
\label{thm:p=2 general} 
Let $1\leq i_1\leq \cdots \leq i_k$ be a non-trivial partition of an integer $\ell\geq 2$ and let $C_1,\ldots, C_k$ be a partition of $[\ell]$ by subsets of size $i_j$ each. Let $s_1\geq 0$ and $0\leq t_1<2^{s_1}$ be integers and let $d=2^{s_1}+t_1$. For $u_1,\ldots, u_\ell$ as above, define $n_j=\sum_{a\in C_j} u_a+2^{i_j}-1$ for all $j\in [k]$. Then any continuous map $f\colon \Delta_{n_1} \times\cdots\times \Delta_{n_k} \rightarrow \R^d$ admits a $2^\ell$-Tverberg family of type $(2^{i_1},\ldots, 2^{i_k})$. 
\end{theorem}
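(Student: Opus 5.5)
We follow the standard configuration space/test map scheme, arranged to respect the product structure. For each $j\in[k]$, take the $p^{i_j}$-fold $2$-wise deleted join $(\Delta_{n_j})^{*p^{i_j}}_{\Delta(2)}$. It is $n_j$-connected, carries a free action of $(\Z/2)^{i_j}$ (acting regularly on the $2^{i_j}$ join factors), and is homeomorphic to the join of $n_j+1$ copies of the discrete set $[2^{i_j}]$. Partition $[\ell]$ by $C_1,\ldots,C_k$ and let $G=(\Z/2)^\ell=\prod_j(\Z/2)^{C_j}$ act coordinatewise on the product
\[X=\prod_{j=1}^k(\Delta_{n_j})^{*2^{i_j}}_{\Delta(2)}.\]
A point of $X$ consists of, for each $j$, convex weights $\lambda^j_{m_j}$ on points $x^j_{m_j}$ lying in pairwise disjoint faces $\sigma^j_{m_j}$. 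The test map sends it to the $2^\ell$ vectors
\[\Bigl(\prod_j\lambda^j_{m_j}\Bigr)\cdot\bigl(1,f(x^1_{m_1},\ldots,x^k_{m_k})\bigr)\in\R^{d+1},\qquad m\in\prod_j[2^{i_j}].\]
We then project off the diagonal to land in $W_{2^\ell}^{\oplus(d+1)}$, where $W_{2^\ell}$ is the augmentation-zero part of the regular representation $\R[G]$. A zero of this map forces all products of weights to be equal, and hence all $\lambda^j_{m_j}=2^{-i_j}$; it also forces all the values $f(x_m)$ to coincide. So it produces a $2^\ell$-Tverberg family of type $(2^{i_1},\ldots,2^{i_k})$. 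It therefore suffices to show that every $G$-map $X\to W^{\oplus(d+1)}$ has a zero.

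Decompose $W=\bigoplus_{\chi\neq 1}\R_\chi$ over the nontrivial characters $\chi$ of $G$. Part of the constraint, namely the summand of the first coordinate corresponding to characters supported in a single factor $C_j$, is automatically satisfied by restricting to a smaller subspace of each join factor. This lets us replace each join by a sphere of dimension $n_j-(2^{i_j}-1)=\sum_{a\in C_j}u_a$ with a free $(\Z/2)^{C_j}$-action. The remaining obstruction is the top Stiefel--Whitney (Euler) class of the bundle $E$ associated to $W^{\oplus d}\oplus(\text{remaining first-coordinate characters})$. We compute in $H^*(B(\Z/2)^\ell;\mathbb F_2)=\mathbb F_2[x_1,\ldots,x_\ell]$: this class is the product $\prod_{\chi}\ell_\chi^{\,d}$ times the extra linear forms, where $\ell_\chi$ is the linear form of $\chi$. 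By Fadell--Husseini index theory, a zero exists provided this class does not vanish in the index quotient. That quotient is $\mathbb F_2[x_1,\ldots,x_\ell]$ modulo the ideal generated by the top Dickson-type classes $\prod_{v\in\mathbb F_2^{C_j}\setminus 0}(v\cdot x)^{\,\cdot}$ raised to the appropriate sphere dimensions. Equivalently, since the dimensions match exactly ($\sum n_j=t$), the condition is that the polynomial $\prod_\chi \ell_\chi^{d}\cdot(\ldots)$ has nonzero coefficient on a monomial $x_1^{u_1}\cdots x_\ell^{u_\ell}$, after grouping by the $C_j$.

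The main obstacle is exactly this coefficient computation. Our plan is to reduce it to the known computation from the Grünbaum--Hadwiger--Ramos problem, namely the Mani-Levitska--Vrećica--Živaljević / Blagojević et al. result that the coefficient of $x_1^{u_1}\cdots x_\ell^{u_\ell}$ in $\prod_{0\neq v\in\mathbb F_2^\ell}(v\cdot x)^{d}$ is nonzero modulo $2$ for the $u_a$ defined recursively above. This is precisely why $\Delta(2^{s_1}+t_1,\ell)\le u_1$. We will prove that nonvanishing by induction on $\ell$. Write the product as $x_1^{d}\prod_{v'}(v'\cdot x')^{d}\,(x_1+v'\cdot x')^{d}$ and use Lucas' theorem on $(x_1+y)^{2^{s}+t}=(x_1^{2^s}+y^{2^s})(x_1+y)^t$. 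Extracting the top power $x_1^{u_1}$ then leaves the product over $\mathbb F_2^{\ell-1}$ with $d$ replaced by $2^{s_1}+2t_1$, which explains the recursion $2^{s_{j-1}}+2t_{j-1}=2^{s_j}+t_j$. Finally, because the index ideal is generated factor by factor and the exponents are distributed by the partition $C_j$, the nonzero monomial coefficient survives in the quotient. This gives the theorem; the refinement remarks then extend it to products of arbitrary polytopes.
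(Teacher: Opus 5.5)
Your final criterion and coefficient computation agree with the paper's, but the topological reduction meant to justify them is wrong as written. On the product of deleted joins $X$ you have $\dim X=\sum_j n_j=d(2^\ell-1)+\sum_j(2^{i_j}-1)$. This is strictly less than $\dim W^{\oplus(d+1)}=(d+1)(2^\ell-1)$, because the partition is nontrivial. Since $G$ acts freely on $X$, the cell-by-cell argument of Remark~\ref{rem:dimension} gives nowhere-vanishing $G$-maps $X\to W^{\oplus(d+1)}$, so the statement you say ``suffices'' is false.

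Likewise, the class you name as the obstruction, $\prod_\chi\ell_\chi^{d}$ times the linear forms of the characters not supported in a single $C_j$, has degree $d(2^\ell-1)+(2^\ell-1)-\sum_j(2^{i_j}-1)>\sum_jN_j=d(2^\ell-1)$. It therefore vanishes on the orbit space and cannot contain the monomial $x_1^{u_1}\cdots x_\ell^{u_\ell}$. Your ``dimensions match exactly'' and your final computation silently drop these factors.

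You have the roles of the characters reversed. The Fourier coefficient of $m\mapsto\prod_j\lambda^j_{m_j}$ at $\chi=(\chi_1,\ldots,\chi_k)$ is $\prod_j\widehat{\lambda^j}(\chi_j)$. So the single-block characters are the ones that pin the weights to $2^{-i_j}$, and once they vanish every multi-block one vanishes too. Also, ``restricting to a smaller subspace'' is legitimate only for your particular test map, not for an arbitrary $G$-map. The correct move is to restrict your test map to $\prod_j(\Delta_{n_j})^{\times 2^{i_j}}_\Delta$, where its entire first coordinate vanishes. You then prove that every $G$-map from there to $W^{\oplus d}=U(\Z_2^\ell,d)$ has a zero. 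This is Proposition~\ref{prop:admissibility}, and the joins play no further role.

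Second, $(\Delta_{n_j})^{\times 2^{i_j}}_\Delta$ is not a sphere when $i_j\geq 2$; no sphere carries a free $(\Z/2)^{i}$-action for $i\geq 2$. It is $N_j$-dimensional and $(N_j-1)$-connected, with $N_j=n_j-2^{i_j}+1$. Its index is therefore all of $H^{\geq N_j+1}(B(\Z/2)^{C_j};\mathbb F_2)$, not an ideal generated by powers of the top Dickson class. Such an ideal misses $x_a^{N_j+1}$, so nonvanishing modulo it would not suffice.

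With the correct index, the K\"unneth formula shows that a class of degree $\sum_jN_j$ survives if and only if its component of block multidegree $(N_1,\ldots,N_k)$ is nonzero. A single monomial with coefficient one guarantees this. This is exactly how the paper proves Proposition~\ref{prop:B-U}(a), via skeleta of the $BG_j$. After these repairs your argument coincides with the paper's.

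In your induction, extracting $x_1^{u_1}$ to leave $\prod_{v'\neq0}(v'\cdot x')^{2^{s_1}+2t_1}$ is the paper's recursion. You must still rule out other contributions to $x_1^{u_1}$. The paper does this in $\mathbb F_2[x_1,\ldots,x_\ell]/(x_1^{u_1+1},\ldots,x_\ell^{u_\ell+1})$ using the strict decrease of the $u_a$ (Lemma~\ref{lem:decreasing}). Alternatively, after expanding $d$ in binary, the fact that $u_1$ and $d$ have the same number of binary ones forces the choice of terms.
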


Our proof of Theorem~\ref{thm:p=2 general} will show that $\sum_{j=1}^k n_j=t(2^\ell,d;2^{i_1},\ldots, 2^{i_k})$. Considering the partition $i_1=1$ and $i_2=2$ of $\ell=3$ as before, an application of Theorem~\ref{thm:p=2 general} to the partition $C_1=\{i\}$ and $C_2=[3]\setminus\{i\}$ of $[3]$ for any $i\in [3]$ shows that one has an $8$-Tverberg family of type $(2,4)$ for any continuous map $f\colon \Delta_{n_1}\times \Delta_{n_2}\rightarrow \R^3$ when $(n_1,n_2)\in \{(5,20), (9,16), (10,15)\}$. Each of these pairs is distinct from those given by Theorem~\ref{thm:main}.\\ 

Our final admissibility results hold in the symmetric setting of $p^{2\ell}$-Tverberg families of type $(p^\ell,p^\ell)$. We shall say that admissible pairs $(n_1,n_2)$ here are \emph{balanced} provided  $n_1+n_2=t(p^{2\ell},d;p^\ell,p^\ell)$ and $|n_1-n_2|\leq 1$, or in other words that the dimensions of the simplex factors are as close together as possible given the sum condition. At the other extreme, we say that $(p^{2\ell},d;p^\ell,p^\ell)$-Tverberg pairs are \emph{arbitrarily prescribable} if $(n_1,n_2)$ is admissible whenever $n_1,n_2\geq (p^{\ell}-1)(d+1)$ and $n_1+n_2=t(p^{2\ell},d;p^\ell,p^\ell)$. 

Theorem~\ref{thm:binomial} below gives a simple binomial coefficient test for $(p^2,d;p,p)$-admissibility, from which it follows that $(4,d;2,2)$-Tverberg pairs are arbitrarily prescribable when $d+1$ is a power of two, and so in particular that the pair $(\lceil\frac{3d}{2}\rceil+1,\lceil\frac{3d}{2}\rceil)$ is balanced. For odd primes $p$ this test gives an evenness criterion on the $p$-ary expansion of $d(p-1)/2$ for the existence of balanced $(p^2,d;p,p)$-Tverberg pairs. Additional balanced $(4,d;2,2)$-Tverberg pairs exist when $d$ is a power of two, and one has balanced $(p^{2\ell},d;p^\ell,p^\ell)$-Tverberg pairs as well when (1) $p=3$ and $d$ is twice a power of three and when (2) $p=5$ and $d$ is a power of $5$. 

\begin{theorem}
\label{thm: arbitrary/balanced} Let $d\geq 1$ be an integer.
\begin{compactenum}[(a)]

\item If $d+1$ is a power of two, then any continuous map $f\colon \Delta_{n_1}\times\Delta_{n_2}\rightarrow \R^d$ admits a $4$-Tverberg family of type $(2,2)$ whenever $n_1,n_2\geq d+1$ and $n_1+n_2=3d+2$.

\item Suppose that $d\geq 2$ is a power of two and let $n=\frac{3d}{2}+1=t(4,d;2,2)/2$. Then any continuous map $f\colon \Delta_n\times\Delta_n\rightarrow \R^d$ admits a $4$-Tverberg family of type $(2,2)$. 

\item Let $p$ be an odd prime. Suppose that the $p$-ary coefficients of $d(p-1)/2$ are all even and let $n=(p-1)(\frac{d(p+1)}{2}+1)=t(p^2,d;p,p)/2$. Then any continuous map $f\colon \Delta_n\times\Delta_n\rightarrow \R^d$ admits a $p^2$-Tverberg family of type $(p,p)$.

\item Let $\ell\geq 1$ be an integer, let $p\in \{3,5\}$, and suppose that $d(p-1)/2=2p^s$ for some $s\geq 0$. Let $n=(p^\ell-1)(\frac{d(p^\ell+1)}{2}+1)=t(p^{2\ell},d;p^\ell,p^\ell)/2$. Then any continuous map $f\colon \Delta_n\times\Delta_n\rightarrow \R^d$ admits a $p^{2\ell}$-Tverberg family of type $(p^\ell,p^\ell)$. 
\end{compactenum}
\end{theorem}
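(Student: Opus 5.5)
The plan is to use the configuration-space/test-map scheme with group $G=(\Z/p)^\ell$ acting on the product of joins, and to reduce existence of a Tverberg family to the non-vanishing of a specific monomial in an equivariant Euler class.

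\textbf{Configuration space.} For the case at hand ($k=2$, $i_1=i_2=\ell$), let $X_j=(\Delta_{n_j})^{*p^\ell}_{\Delta}$ be the $p^\ell$-fold deleted join of the $j$-th factor. Give it the action of $G_j=(\Z/p)^\ell$, acting regularly on the $p^\ell$ join indices. Then $X_1\times X_2$ carries an action of $G=G_1\times G_2\cong(\Z/p)^{2\ell}$, which acts regularly on the index set $[p^\ell]\times[p^\ell]$ of the product faces $\sigma_a\times\tau_b$.

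\textbf{Test map.} A point $(\sum \lambda_a x_a,\sum\mu_b y_b)$ is sent to the family $\lambda_a\mu_b(f(x_a,y_b),1)\in\R^{d+1}$, indexed by $(a,b)$. We then project to the complement of the diagonal, after checking that the weights $\lambda_a\mu_b$ are not all equal. The cleanest route is to split the map in two pieces:
\begin{itemize}
\item a map to $W_{p^{2\ell}}^{\oplus d}$, where $W$ is the reduced regular representation, built from the $f$-coordinates, together with
\item two separate maps to $W_{p^\ell}(G_1)$ and $W_{p^\ell}(G_2)$ equalizing the $\lambda$'s and the $\mu$'s.
\end{itemize}
A zero of this combined map then gives the Tverberg family of type $(p^\ell,p^\ell)$. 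The target has dimension
\[d(p^{2\ell}-1)+2(p^\ell-1)=t,\]
and the domain $X_1\times X_2$ has dimension $n_1+n_2+2(p^\ell-1)-\dots$; the count works out so that the sum condition is exactly the dimension match of Remark~\ref{rem:dimension}.

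\textbf{Cohomological reduction.} Since $X_j$ is $(n_j-1)$-connected, $X_j$ behaves like $EG_j$ up to degree $n_j$. A zero-free equivariant map is therefore obstructed if the Euler class
\[e\bigl(W_{p^{2\ell}}^{\oplus d}\bigr)\,e(W(G_1))\,e(W(G_2))\]
is nonzero in $H^*(BG_1\times BG_2;\mathbb F_p)$ modulo the ideal generated in degrees above $n_1$ and $n_2$. For $p$ odd we use the polynomial part $\mathbb F_p[x_1,\dots,x_\ell,y_1,\dots,y_\ell]$; the total degree is doubled in the usual way. The key point is that the Euler class of the reduced regular representation of $(\Z/p)^{2\ell}$ is the product over nonzero linear forms, i.e. the Dickson product $\prod_{(\alpha,\beta)\neq0}(\alpha\cdot x+\beta\cdot y)$, up to the usual $(p-1)/2$ halving for complex structure. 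The Euler classes $e(W(G_j))$ are the Dickson products in one set of variables. The statement becomes: the coefficient of $x^{\text{top}}y^{\text{top}}$ in
\[\Bigl(\prod_{(\alpha,\beta)\neq 0}(\alpha x+\beta y)\Bigr)^{d(p-1)/2}\]
(suitably normalised) is nonzero mod $p$. Here the degrees are pinned to $(n_1,n_2)$ with $n_1=n_2=n$.

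\textbf{Coefficient computation, the main obstacle.} Writing the product over pairs with both $\alpha,\beta\ne0$ in terms of the Dickson invariants of $x$ and $y$ separately, the dominant factor is $\prod_{\beta\ne0}\prod_\alpha(\alpha x+\beta y)=\prod_{\beta\neq 0}\bigl(V(x)+V_\beta(y)\bigr)$, where $V$ is the Moore determinant quotient. For $\ell=1$, i.e. parts (a)--(c), this reduces to a polynomial in $x^{p-1}$ and $y^{p-1}$ of the form $(x^{p-1}-y^{p-1})^{(p-1)}$-type powers. The coefficient needed is then a binomial coefficient $\binom{d(p-1)/2\cdot\ldots}{\ldots}$, and Lucas' theorem gives:
\begin{itemize}
\item part (a): for $d+1=2^s$, all middle binomials $\binom{d+1}{\cdot}$-type coefficients, i.e. $\binom{2^s-1}{j}$, are odd, so every split of $n_1+n_2$ works;
\item part (b): for $d=2^s$, the middle coefficient is odd;
\item part (c): the evenness of the $p$-ary digits of $d(p-1)/2$ makes the central coefficient $\binom{2m}{m}$-type nonzero digitwise.
\end{itemize}
Part (d) for general $\ell$ is where I expect real difficulty. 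One must expand the full Dickson product for $(\Z/p)^{2\ell}$ and find the central monomial. The plan is to use that for $d(p-1)/2=2p^s$ the Frobenius gives $(\,\cdot\,)^{2p^s}=((\,\cdot\,)^{p^s})^2$, reducing to the square of the product of linear forms. The square's central coefficient can then be evaluated for $p=3,5$ by an explicit symmetry/sign-count argument (this is why only these primes appear). I would first try to verify this for $\ell=1$ directly, and then induct on $\ell$ via the Dickson recursion $V_{\ell+1}=V_\ell\cdot\prod(\ldots)$.
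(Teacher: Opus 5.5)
Your argument fails at part (b). For $G=\Z_2^2$ the relevant class is $h=(x_1^2x_2+x_1x_2^2)^d=x_1^dx_2^d(x_1+x_2)^d$. The balanced pair $n_1=n_2=\frac{3d}{2}+1$ needs the coefficient of $x_1^{3d/2}x_2^{3d/2}$, which is $\binom{d}{d/2}$. For $d=2^s$ with $s\geq 1$ this is even: by Lucas, $\binom{2^s}{2^{s-1}}\equiv\binom{1}{0}\binom{0}{1}=0$. In fact, by Frobenius, $h\equiv x_1^{2d}x_2^{d}+x_1^{d}x_2^{2d}\pmod 2$.

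Passing to deleted joins does not help, because the extra Euler classes $e(W(G_j))=x_j$ only shift the bidegree. So the $\Z_2^2$ characteristic-class obstruction vanishes in bidegree $(\frac{3d}{2},\frac{3d}{2})$. It only yields the unbalanced pairs $(d+1,2d+1)$ and $(2d+1,d+1)$, and gives no conclusion for the balanced one.

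The missing idea is the extra symmetry available when $n_1=n_2$. The test map is equivariant for $D_8=(\Z_2\times\Z_2)\rtimes\mathfrak{S}_2$, with $\mathfrak{S}_2$ swapping the two factors. The paper extends it to a $D_8$-map $(\Delta_n)^{\ast 2}_\Delta\times(\Delta_n)^{\ast 2}_\Delta\to U(\Z_2^2,d)\oplus V_1$ with the same zeros. It then invokes the $D_8$ Borsuk--Ulam theorem of \cite{FS24} (Proposition~\ref{prop:D_8} with $m=0$), a non-abelian input that mod $2$ classes of $\Z_2^2$ cannot see. Your parts (a) and (c) are fine and coincide with the paper's binomial test.

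Part (d) is still only a plan. The Frobenius step $(\cdot)^{2p^s}=((\cdot)^{p^s})^2$ is exactly the paper's, but no induction via a Dickson recursion is needed. Take the monomial in which $y_i$ and $y_{2\ell-i+1}$ both carry exponent $p^s(p^{2\ell-i}+p^{i-1})$, for $i\in[\ell]$. Uniqueness of $p$-ary expansions then forces, for each pair $\{i,2\ell-i+1\}$, one of $\sigma,\tau$ to fix it pointwise and the other to swap it. This gives exactly $2^\ell$ contributing pairs, each with $\sgn(\sigma)\sgn(\tau)=(-1)^\ell$, so the coefficient is $(-1)^\ell2^\ell\neq 0$ in $\Z_p$.

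This count works for every odd $p$. The restriction $p\in\{3,5\}$ is not explained by it, as you suggest; it only reflects that $d=4p^s/(p-1)$ must be an integer.

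Finally, in your reduction the bidegree of $h$ should be $(N_1,N_2)$ with $N_j=n_j-(p^\ell-1)$ (halved for odd $p$), not $(n_1,n_2)$. This is harmless once you cancel the nonzero homogeneous factors $e(W(G_j))$ in the polynomial domain, but it should be stated.
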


Theorem~\ref{thm: arbitrary/balanced}(b) combined with Theorem~\ref{thm:main} implies in particular that $(4,d;2,2)$-Tverberg pairs are arbitrarily prescribable when $d=2$. As examples of Theorem~\ref{thm: arbitrary/balanced}(c)(d), we have that any continuous map $f\colon  \Delta_{10}\times  \Delta_{10}\rightarrow \R^2$ admits a $9$-Tverberg family of type $(3,3)$, while any continuous map $f\colon \Delta_{88}\times  \Delta_{88}\rightarrow \R^2$ admits an $81$-Tverberg family of type $(9,9)$. Here $t(9,2;3,3)=20$ and $t(81,2;9,9)=176$, which can be compared to $t(9,2)=24$ and $t(81,2)=240$, respectively. 

 We note that Theorem~\ref{thm: arbitrary/balanced}(c) holds for all $d$ of the form $d=2a(p^{\ell_1}+\cdots+p^{\ell_k})$, where the $\ell_1,\ldots, \ell_k$ are all distinct and $1\leq a\leq p-1$ is odd. In particular, one may let $d=2a(p^\ell-1)/(p-1)$ for any $\ell\geq 1$. Moreover, if $p\equiv 1~(\text{mod}~4)$ one may also let $d$ be the sum of distinct powers of $p$, and so in particular let $d=(p^\ell-1)/(p-1)$ for any $\ell\geq 1$ (see ~\cite{HMRS26}).

Finally, we observe that if $d-1$ is a power of two and $n=\lceil\frac{3d}{2}\rceil+1$, then one also has that any continuous map $f\colon \Delta_n\times\Delta_n\rightarrow \R^d$ admits a $4$-Tverberg family of type $(2,2)$ (see Remark~\ref{rem:d=2^s+1} below). As $2n=3d+3=t(4,d)>t(4,d;2,2)$, the pair $(n,n)$ is not optimal, however. We note that the numbers $n=\lceil\frac{3d}{2}\rceil+1$ in our results when either $d-1,d,$ or $d+1$ is a power of two correspond to the only known solutions to the Gr\"unbaum--Hadwiger--Ramos problem above for two hyperplanes, in which case one has $\Delta(d,2)=\lceil\frac{3d}{2}\rceil$.

\subsection{The multilinear setting} When restricted to maps $f\colon \Delta_{n_1}\times\cdots \times \Delta_{n_k}\rightarrow \R^d$ which are linear on each simplex factor, the results above give specialized Tverberg partitions of grid-indexed point sets in $\R^d$. Namely, suppose that $(n_1,\ldots, n_k)$ is  $(p^\ell,d;p^{i_1},\ldots, p^{i_k})$-Tverberg admissible and let $X=\{x_a\mid a=(a_1,\ldots, a_k)\in \prod_{j=1}^k [n_j+1]\}$ be any set of $\prod_{j=1}^k (n_j+1)$ points in $\R^d$. Given non-empty subsets $A^1\subset [n_1+1],\ldots, A^k\subset [n_k+1]$, let $A=A^1\times\cdots\times A^k$ and let $X_A=\{x_a\mid a\in A\}$ denote the corresponding subset of $X$. We define its \emph{tensor hull} by \[\tens(X_A)=\left\{\sum_{(a_1,\ldots, a_k)\in A}t_{a_1}\cdots t_{a_k} x_{(a_1,\ldots, a_k)}\mid t_{a_j}\geq 0\,\, \text{and}\,\, \sum_{a_j\in A^j}t_{a_j}=1\,\, \text{for all}\,\, j\in [k]\right \}.\] We observe that $\tens(X_A)$ is always a subset of $\conv (X_A)$ and is generally smaller. One then has the following immediate consequence of Theorems~\ref{thm:main}-~\ref{thm: arbitrary/balanced}.

\begin{corollary}
\label{cor:multilinear}
 Let $d\geq 1$ be an integer, let $p$ be a prime number, let $1\leq i_1\leq \cdots\leq i_k$ be a non-trivial partition of an integer $\ell\geq 2$, and let $(n_1,\ldots, n_k)$ be as in Theorems~\ref{thm:main}-~\ref{thm: arbitrary/balanced}. Then for each $j\in [k]$ there exists a partition of $[n_j+1]$ by $p^{i_j}$ pairwise disjoint subsets $A^j_1,\ldots, A^j_{p^{i_j}}$ such that \[\bigcap_{(m_1,\ldots, m_k)\in [p^{i_1}]\times\cdots\times [p^{i_k}]}\tens(X_{A^1_{m_1}\times\cdots \times A^k_{m_k}})\neq \emptyset.\] 
 \end{corollary}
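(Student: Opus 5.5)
The plan is to specialize the continuous statements of Theorems~\ref{thm:main}--\ref{thm: arbitrary/balanced} to one concrete multilinear map built from the point set $X$. For each $j\in[k]$, write the vertices of $\Delta_{n_j}$ as $e^j_1,\ldots,e^j_{n_j+1}$, so a point of $\Delta_{n_j}$ has barycentric coordinates $(t_{a_j})_{a_j\in[n_j+1]}$ with $t_{a_j}\geq 0$ and $\sum t_{a_j}=1$. Define
\[f\colon \Delta_{n_1}\times\cdots\times\Delta_{n_k}\rightarrow\R^d,\qquad f(t^1,\ldots,t^k)=\sum_{(a_1,\ldots,a_k)\in\prod_j[n_j+1]} t_{a_1}\cdots t_{a_k}\,x_{(a_1,\ldots,a_k)}.\]
This is a polynomial in the coordinates, hence continuous, and it is linear in each simplex factor separately.

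By hypothesis $(n_1,\ldots,n_k)$ is $(p^\ell,d;p^{i_1},\ldots,p^{i_k})$-Tverberg admissible, so the relevant theorem gives the following. For each $j$ there are $p^{i_j}$ pairwise disjoint faces $\sigma^j_{m_j}$ of $\Delta_{n_j}$, $m_j\in[p^{i_j}]$, such that the images $f(\sigma^1_{m_1}\times\cdots\times\sigma^k_{m_k})$ have a common point. Let $B^j_{m_j}\subset[n_j+1]$ be the vertex set of $\sigma^j_{m_j}$. These sets are nonempty and pairwise disjoint, since disjoint faces of a simplex have disjoint vertex sets.

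Next I would check that images of product faces are exactly tensor hulls. A point of $\sigma^1_{m_1}\times\cdots\times\sigma^k_{m_k}$ is a tuple of barycentric vectors $t^j$ supported on $B^j_{m_j}$, each summing to $1$. Substituting into $f$ kills every term with some $a_j\notin B^j_{m_j}$. What remains is precisely the general element of $\tens(X_{B^1_{m_1}\times\cdots\times B^k_{m_k}})$, so
\[f(\sigma^1_{m_1}\times\cdots\times\sigma^k_{m_k})=\tens(X_{B^1_{m_1}\times\cdots\times B^k_{m_k}}),\]
and the intersection of these tensor hulls over all $(m_1,\ldots,m_k)$ is nonempty.

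The only remaining issue, and the one mild obstacle, is that the corollary asks for a \emph{partition} of $[n_j+1]$, while the $B^j_{m_j}$ need not cover it. To fix this, I would enlarge each family: choose $A^j_{m_j}\supseteq B^j_{m_j}$ by adding all uncovered indices of $[n_j+1]$ to $B^j_1$, say. The resulting sets remain pairwise disjoint and now cover $[n_j+1]$. Tensor hulls are monotone under enlarging the index sets, because setting the new coordinates $t_{a_j}=0$ recovers the old points. Hence $\tens(X_{B^1_{m_1}\times\cdots\times B^k_{m_k}})\subseteq\tens(X_{A^1_{m_1}\times\cdots\times A^k_{m_k}})$ for every $(m_1,\ldots,m_k)$, and the common point found above lies in all of the enlarged tensor hulls. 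This completes the argument.
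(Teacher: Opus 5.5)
Your proposal is correct and is the argument the paper intends. The paper gives no written proof: it calls the corollary an immediate consequence of applying the admissibility theorems to the map that is linear on each simplex factor. You have filled in the details the paper leaves implicit, namely that images of product faces are exactly tensor hulls, and that the Tverberg faces' vertex sets can be padded to a genuine partition of $[n_j+1]$ because tensor hulls are monotone.
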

 
 For example, Theorem~\ref{thm: arbitrary/balanced}(b) implies that any set $X$ of $25$ points in the plane indexed by the grid $[5]\times [5]$ admits partitions $\{A_1,A_2\}$ and $\{B_1,B_2\}$ of $[5]$ such that the corresponding four sets $X_{A_i\times B_j}$ partition $X$ and have completely overlapping tensor hulls. Theorem~\ref{thm:main} gives the analogous conclusion for any set of $24$ points in the plane indexed by $[4]\times [6]$. 
 
\begin{remark}
\label{rem:alternative} 
It should be mentioned that an alternative Tverberg product theorem was recently given in~\cite{HMMZ26} where the domain is a simplex with vertex set the grid $[n]^m$. For any dimension $d$ and prime power $r$ it was shown that when $n=\lceil(\frac{d}{m}+1)(r-1)+\frac{1}{m}\rceil$ there exists a partition $A_1,\ldots, A_r$ of $[n]$ and an integer $i\in [m]$ such that the images of the $r$ faces with vertex sets of the form $B_k:=[n]\times\cdots \times A_k\times \cdots\times [n]$ have overlapping images, where each $A_k$ lies in the $i$-th factor. The linear version of this theorem gives partitions of point sets indexed by such grids into subsets determined by the $B_k$ whose ordinary convex hulls all intersect. By contrast, the domains of our results are products of simplices, and our product Tverberg faces are determined by a partition of the vertex set of each simplex factor. For grid-indexed point sets we have a partition into subsets determined by a partition of each factor, with a resulting intersection of restricted convex hulls. The number of points we require for this additional structure is far larger than those given there (or in Tverberg's original theorem, which is the $m=1$ case), however. 
\end{remark}

\subsection{van Kampen--Flores type variants} As with the many constrained extensions of the topological Tverberg theorem (see, e.g., ~\cite{BFZ14}), one may provide various restrictions on the faces of our Tverberg families. We shall content ourselves with an analogue of a dimensionally constrained version of Sarkaria~\cite{Sa91} generalizing the well-known van Kampen--Flores theorem~\cite{vK32,Fl32}.

\begin{theorem}
\label{thm:vK-F}
Let $d\geq 1$ be an integer and let $r\geq 2$ be a prime power. Then any continuous map $f\colon \Delta_{(d+2)(r-1)}\rightarrow \R^d$ admits $r$ Tverberg faces $\sigma_1,\ldots, \sigma_r$ such that $\dim \sigma_i\leq \lfloor \frac{(d+1)(r-1)}{r}\rfloor$ for all $i\in [r]$. 
\end{theorem}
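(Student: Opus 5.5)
This is Sarkaria's generalized van Kampen--Flores theorem (Sarkaria for primes, Volovikov for prime powers). I would prove it by the standard configuration space / test map scheme, using the $r$-fold $k$-wise deleted join of the simplex. Write $N=(d+2)(r-1)$ and $k=\lfloor \frac{(d+1)(r-1)}{r}\rfloor$. Work with the $r$-fold deleted join $(\Delta_N)^{*r}_{\Delta}$, which is the simplicial complex $[r]^{*(N+1)}$, and its subcomplex $K$ of joins $\sigma_1*\cdots*\sigma_r$ with pairwise disjoint faces and $\dim\sigma_i\le k$. The group $G=(\Z/p)^n\subset S_r$ (with $r=p^n$) acts freely on $K$ by permuting join coordinates.

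Suppose $f$ has no such Tverberg family. Then the map
\[
\lambda_1x_1+\cdots+\lambda_rx_r\mapsto (\lambda_1,\lambda_1f(x_1),\ldots,\lambda_r,\lambda_rf(x_r))
\]
avoids the diagonal of $(\R^{d+1})^r$. Projecting to the orthogonal complement $W_r^{\oplus(d+1)}$ and normalizing gives a $G$-map $K\to S(W_r^{\oplus(d+1)})$. Here $W_r$ is the $(r-1)$-dimensional standard representation, so the target is a fixed-point-free sphere of dimension $(d+1)(r-1)-1$.

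The key step is to show that $K$ is $((d+1)(r-1)-1)$-connected. The natural approach is to write $K$ as a join of $N+1$ copies of the chessboard-type complexes, or more precisely as the $(k+1)$-skeleton-restricted deleted join $(\Delta_N^{(k)})^{*r}_{\Delta}$. For this one can use the known connectivity of $r$-fold deleted joins of skeleta of simplices, namely that $(\Delta_N^{(k)})^{*r}_\Delta$ is $(N-1)$-connected when $N\le r(k+1)-1$... Alternatively, one can directly use Sarkaria's inequality $\mathrm{conn}\ge$ via the identification with chessboard complexes $[r]^{*(N+1)}$ truncated by the cardinality bound, together with the Bier sphere / Volovikov connectivity estimate. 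The dimension count is $N=(d+2)(r-1)$ with $r(k+1)\ge (d+1)(r-1)+r-(r-1)$. This gives exactly the needed $((d+1)(r-1)-1)$-connectivity; a quick check is that $r(k+1)-1\ge (d+1)(r-1)$.

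Finally, Volovikov's theorem says there is no $G$-map from an $m$-connected free $G$-space to a fixed-point-free $G$-sphere of dimension $m$ for $G$ an elementary abelian $p$-group. Applying it with $m=(d+1)(r-1)-1$ gives the contradiction. Alternatively, use Fadell--Husseini index monotonicity, the same cohomological tool presumably underlying the paper's main results.

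The main obstacle is the connectivity estimate for the dimension-restricted deleted join. I would first try to prove it by induction on the number of vertices via the Quillen/link-deletion argument: adding one vertex gives $K_{N}=K_{N-1}*$-type pushouts with links that are deleted joins of smaller skeleta. An alternative is to cite Sarkaria and Volovikov's computation directly.
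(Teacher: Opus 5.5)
Your framework is sound: $G$ acts freely on $K=(\Delta_N^{(k)})^{\ast r}_\Delta$, and your test map misses the diagonal exactly when there is no Tverberg family in the $k$-skeleton. The target $S(W_r^{\oplus(d+1)})$ is a fixed-point-free sphere of dimension $(d+1)(r-1)-1=N-r$. Volovikov's theorem finishes once $K$ is $(N-r)$-connected, and mod $p$ homological connectivity suffices. The gap is that this connectivity is the entire content of the route, and you have not established it.

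\begin{itemize}
\item \emph{The lemma you half-quote is false.} For $r=2$, $k=0$, $N=1$ one has $N\le r(k+1)-1$, yet $K$ is two disjoint edges.
\item \emph{It also fails exactly where it would apply here.} With your parameters $k+1=\lfloor (N+1)/r\rfloor$, so $r(k+1)\le N+1$, and its hypothesis holds only when $r\mid d+1$. Take $r=2$ and $d$ odd: Alexander duality inside $(\Delta_N)^{\ast 2}_\Delta\cong S^N$ shows $K$ is a homology $(d+1)$-sphere. So $K$ is not $(N-1)$-connected.
\item \emph{Your ``quick check'' is only a dimension count.} The inequality $r(k+1)-1\ge (d+1)(r-1)$ says $\dim K$ exceeds the required connectivity. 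That is necessary but gives no vanishing of homology.
\item \emph{$K$ is not a join of $N+1$ pieces.} The bound on color classes couples the coordinates, so the join argument that makes $[r]^{\ast(N+1)}$ $(N-1)$-connected is unavailable.
\end{itemize}

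The required bound is also sharp: for $r=2$, $K$ has the homology of $S^{d+1}$ for every $d$, so it is exactly $d$-connected. It therefore needs a genuine argument. The induction you sketch is not carried out, and ``cite Sarkaria and Volovikov'' amounts to citing the theorem you are proving.

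The paper does not reprove this statement; it quotes it from Sarkaria. For its own van Kampen--Flores variants it uses the constraint map $D$ of Section~\ref{sec:CS/TM}, following Blagojevi\'c--Frick--Ziegler. That mechanism puts the constraint in the target rather than the domain, so no skeleton connectivity is needed. For this statement it gives a complete proof:
\begin{enumerate}
\item Since $N=(r-1)((d+1)+1)$, Theorem~\ref{thm:top Tverberg} applies to $g=(f,\dist(\cdot,\Delta_N^{(k)}))\colon\Delta_N\to\R^{d+1}$.
\item This yields Tverberg points $x_1,\ldots,x_r$ with pairwise disjoint supports.
\item As $\lfloor (N+1)/r\rfloor=k+1$, pigeonhole gives some $\supp(x_{i_0})$ of dimension at most $k$.
\item So the common last coordinate $\dist(x_i,\Delta_N^{(k)})$ vanishes, and every $\supp(x_i)$ lies in the $k$-skeleton.
\end{enumerate}
In your own setup, the fix is to keep the full deleted join $[r]^{\ast(N+1)}$, which is $(N-1)$-connected. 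Then add the coordinates $\lambda_i\dist(x_i,\Delta_N^{(k)})$ to the test map. The target becomes $S(W_r^{\oplus(d+2)})$, of dimension exactly $N-1$, and Volovikov's theorem applies directly.
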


As refinements preserve dimension, Theorem~\ref{thm:vK-F} automatically extends to arbitrary polytopes of dimension $(d+2)(r-1)$. Our final result gives extensions of Theorem~\ref{thm:vK-F} to $p^\ell$-Tverberg families of type $(p,\ldots, p)$. These may likewise be extended to arbitrary polytopes of the appropriate dimensions.

\begin{theorem}
\label{thm:vK-F product}
Let $d\geq 1$ and $\ell\geq 2$  be integers, let $p$ be a prime number, and let $(n_1,\ldots, n_\ell)$ be a $(p^\ell, d; p,\ldots, p)$-Tverberg admissible  $\ell$-tuple guaranteed by Theorems~\ref{thm:main}-~\ref{thm: arbitrary/balanced}. Then any continuous map $f\colon\Delta_{n_1+p-1}\times\cdots\times \Delta_{n_\ell+p-1}\rightarrow \R^d$ admits a $p^\ell$-Tverberg family $\{\sigma_{m_1}^1\times\cdots\times\sigma_{m_\ell}^\ell\mid (m_1,\ldots, m_\ell)\in [p]^\ell\}$ of type $(p,\ldots, p)$ such that $\dim \sigma_i^j\leq \lfloor \frac{n_j}{p}\rfloor$ for all $i\in [p]$ and $j\in [\ell]$.
\end{theorem}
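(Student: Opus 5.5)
We would follow Sarkaria's constraint method, with the chessboard/deleted join construction done factorwise. The earlier results would not be used as statements about maps. Instead we would use their underlying equivariant input. This is the non-existence of a $G=(\Z/p)^\ell$-equivariant map (equivalently, the non-vanishing of the relevant Euler class / Fadell--Husseini index computation) from the product of deleted joins $(\Delta_{n_1})^{*p}_{\Delta}\times\cdots\times(\Delta_{n_\ell})^{*p}_{\Delta}$, or its join-type configuration space, into the complement of the diagonal of $(\R^{d+1})^{p^\ell}$. Here $G$ acts factorwise and permutes the $p^\ell$ product faces. For each $j$ we set $N_j=n_j+p-1$. The aim is to enlarge the target by an extra $\Z/p$-representation on the $j$-th factor which encodes the dimension constraint there, and to show that the index computation still forces a zero.

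\textbf{Key steps.} First, on $\Delta_{N_j}$ we form the standard van Kampen--Flores auxiliary map $g_j\colon \Delta_{N_j}\to\R$, namely the distance to the $\lfloor n_j/p\rfloor$-skeleton. On the $p$-fold deleted join we combine the $p$ values of $g_j$ into a $\Z/p$-map to $W_p=\{x\in\R^p:\sum x_i=0\}$, weighted by the join coordinates. A zero of this component means the faces $\sigma^j_1,\ldots,\sigma^j_p$ all carry equal weight and lie in the $\lfloor n_j/p\rfloor$-skeleton. (The usual pigeonhole with $N_j+1=n_j+p$ vertices split among $p$ faces shows that the weights can be equalized only when every face has dimension at most $\lfloor n_j/p\rfloor$; this is exactly Sarkaria's count.) Second, we define the test map on $\prod_j(\Delta_{N_j})^{*p}_\Delta$. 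Its first component is the original Tverberg test map to $W_{p^\ell}\otimes\R^{d+1}$, with $W_{p^\ell}$ the reduced regular $G$-representation. Its remaining components are the $\ell$ constraint maps, valued in $W_p^{(j)}$, the reduced regular representation pulled back along the $j$-th projection $G\to\Z/p$. A zero of the total map yields the desired constrained Tverberg family.

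Third, we would show that every $G$-map must vanish. The target sphere now carries the Euler class $e(W_{p^\ell}\otimes\R^{d+1})\cdot\prod_j e(W_p^{(j)})$. The extra factor $e(W_p^{(j)})$ is, up to a unit, $y_j^{(p-1)/2}$, or $y_j$ when $p=2$, where $y_j\in H^*(B\Z/p)$ is the generator of the $j$-th factor. The domain $(\Delta_{N_j})^{*p}_\Delta$ is $(N_j-1)$-connected, i.e. its connectivity increases by exactly $p-1$ over the unconstrained case. So the Fadell--Husseini index of the domain is contained in the ideal generated by the $y_j^{(N_j+1)/?}$-type monomials with each exponent shifted by the same amount that the constraint Euler class contributes. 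The admissibility statements of Theorems~\ref{thm:main}--\ref{thm: arbitrary/balanced} were proved by showing that $e(W_{p^\ell}\otimes\R^{d+1})$ does not lie in the ideal generated by the top classes of the individual factors. Multiplying by $\prod_j y_j^{(p-1)/2}$ (or by $\prod_j y_j$ when $p=2$) shifts both sides compatibly. In the polynomial part $\mathbb F_p[y_1,\ldots,y_\ell]$, multiplication by a monomial is injective modulo a monomial ideal once the exponents are shifted accordingly. Hence the non-membership, and with it the non-existence of an equivariant map, persists.

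\textbf{Main obstacle.} The main obstacle is this last step. We must make precise, in the form actually used in the earlier proofs, that their criterion is a non-vanishing of a coefficient of $e\cdot$(monomial) modulo $(y_1^{a_1},\ldots,y_\ell^{a_\ell})$. This would then pass to the shifted ideal $(y_1^{a_1+c},\ldots,y_\ell^{a_\ell+c})$, where $c$ is $(p-1)/2$ in the mod-$p$ degree-halved convention. A secondary issue is verifying the pigeonhole count, namely that $n_j+p$ vertices and equal weights force $\dim\sigma_i^j\le\lfloor n_j/p\rfloor$ rather than an off-by-one bound. For odd $p$ we must also handle the exterior classes correctly. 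We would reduce to the polynomial subring by restricting to the even-degree part, as is presumably done in the earlier proofs.
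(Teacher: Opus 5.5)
There is a genuine gap, and it lies in your choice of target, before any index computation. A product of deleted joins carries only $\ell(p-1)$ dimensions of join weights, one $(p-1)$-simplex of weights per factor. The $\R$-coordinate of $W_{p^\ell}\otimes\R^{d+1}$, however, spends $p^\ell-1$ dimensions on them.

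Concretely, your domain $\prod_j(\Delta_{n_j+p-1})^{*p}_\Delta$ has dimension $\sum_j(n_j+p-1)=d(p^\ell-1)+2\ell(p-1)$. The sphere of $W_{p^\ell}\otimes\R^{d+1}\oplus\bigoplus_j W_p^{(j)}$ has dimension $(d+1)(p^\ell-1)+\ell(p-1)-1$. This is at least as large, since $p^\ell-1-\ell(p-1)\geq 1$ for $\ell\geq 2$. By the cell-by-cell argument of Remark~\ref{rem:dimension}, a free $G$-complex of dimension at most that of the sphere always maps equivariantly into it. Equivalently, your Euler class has degree exceeding the top degree of the domain, so it vanishes for trivial reasons.

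The same count already refutes the unconstrained ``input'' you attribute to the earlier theorems. For example, with $p=\ell=2$, $d=1$ and $(n_1,n_2)=(2,3)$, a $\Z_2^2$-map $S^2\times S^3\to S^5$ exists. Those theorems concern deleted products and $U(\Z_p^\ell,d)\cong W_{p^\ell}\otimes\R^d$, with no extra $\R$.

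The repair is what the paper does. Use the deleted products $(\Delta_{n_j+p-1})^{\times p}_\Delta$ and the map $F\oplus D$ into $U(\Z_p^\ell,d)\oplus U(\Z_p)^{\oplus\ell}$. Passing from $n_j$ to $n_j+p-1$ raises each $N_j$ in Proposition~\ref{prop:B-U} by exactly $1$ (resp.\ $(p-1)/2$). The constraint contributes exactly the factor $x_1\cdots x_\ell$ (resp.\ $(y_1\cdots y_\ell)^{(p-1)/2}$). So the required coefficient is the one already shown to be nonzero, and your monomial-shift observation is precisely this step.

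If you want to keep joins, the weights must be encoded factorwise by a copy of $\bigoplus_j W_p^{(j)}$, with $F$ multiplied by the join coordinates as in the paper's $\widetilde F$. A second copy of $\bigoplus_j W_p^{(j)}$ is then needed for the constraint. A single copy carrying $\lambda_i\dist(x_i,\Sigma)$ does not force equal weights.

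Second, no uniform $\Z_p^\ell$ index argument can cover the tuples coming from Theorem~\ref{thm: arbitrary/balanced}(b), which the statement explicitly includes. There $n_1=n_2=\frac{3d}{2}+1$ with $d=2^s\geq 2$. The relevant coefficient of $h_{U(\Z_2^2,d)}$ is $\binom{d}{d/2}$, which is even. Hence the $\Z_2^2$ Stiefel--Whitney class vanishes on the domain, also after multiplying by $x_1x_2$, and that admissibility result was never a non-membership statement at all.

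The paper handles this case separately. It extends $F\oplus D$ from $(\Delta_{n+1})^{\times 2}_\Delta\times(\Delta_{n+1})^{\times 2}_\Delta$ to a $D_8$-equivariant map on the product of deleted joins with target $U(\Z_2^2,d)\oplus V_2$. It then invokes the dihedral Borsuk--Ulam result, Proposition~\ref{prop:D_8} with $m=1$. Your proposal needs this additional input for that case.
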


As an example of Theorem~\ref{thm:vK-F product}, the van Kampen--Flores extension of Theorem~\ref{thm: arbitrary/balanced}(b) states that for any continuous map $f\colon \Delta_5\times \Delta_5\rightarrow \R^2$ there is a $4$-Tverberg family $\{\sigma_i\times \tau_j\mid i,j\in [2]\}$ of type $(2,2)$ such that each $\sigma_i$ and $\tau_j$ is at most two-dimensional. Considering multilinear maps, one has that for any set  $X=\{x_{i,j}\mid i,j\in [6]\}$ of 36 points in the plane there are partitions $\{A_1,A_2\}$ and $\{B_1,B_2\}$ of $[6]$ by sets of size three each such that the tensor hulls of the resulting sets $X_{A_i\times B_j}$ completely overlap.

\section{Configuration-Space/Test-Map Scheme for Admissibility}
\label{sec:CS/TM}

The remainder of our paper is devoted to proving the theorems above. In this section we discuss the configuration-space and test-map scheme for all of our results, except that of Theorem~\ref{thm: arbitrary/balanced}(b) and its van Kampen--Flores extension, which we defer to Section~\ref{sec:join}. The remainder of our results are derived in Section~\ref{sec:polynomial proofs} as consequences of a Borsuk--Ulam type statement given in Section~\ref{sec:polynomial criteria} and proven in Section~\ref{sec:B-U Proof}. 

\subsection{Proof Overview}

Our proofs follow the established \emph{configuration-space/test-map} paradigm typically used in topological combinatorics  (see, e.g., ~\cite{Zi17}), so that our results are reduced to corresponding statements concerning equivariant mappings (that is, Borsuk--Ulam type theorems). The construction in our case is a product version of that used in a standard proof of Theorem~\ref{thm:top Tverberg} (see, e.g., ~\cite{Oz87,Vo96, BZ17}). For that result the existence of a $p^\ell$-Tverberg family for a continuous map $f\colon \Delta_n\rightarrow \R^d$ follows from the existence of a zero of a naturally associated $\Z_p^\ell$-equivariant map $F\colon (\Delta_n)^{\times p^\ell}_\Delta\rightarrow U(\Z_p^\ell,d)$ from the $p^\ell$-fold ``deleted product'' of the simplex to a certain $\Z_p^\ell$-module $U(\Z_p^\ell,d)$. Any such map admits a zero when $n\geq (p^\ell-1)(d+1)$ owing to the high connectivity of the domain and the relative simplicity of the mod $p$ cohomology ring of the group $\Z_p^\ell$. Analogously, given a continuous map $f\colon \Delta_{n_1}\times\cdots\times \Delta_{n_k}\rightarrow \R^d$, the existence of a $p^\ell$-Tverberg family of type $(p^{i_1},\ldots, p^{i_k})$ follows from the existence of a zero for any $\Z_p^\ell$-equivariant map $F\colon (\Delta_{n_1})^{\times p^{i_1}}_{\Delta}\times \cdots \times (\Delta_{n_k})^{\times p^{i_k}}_\Delta\rightarrow U(\Z_p^\ell,d)$ from the product of the respective $p^{i_j}$-fold deleted products to the same representation as before. The sum condition $\sum_{j=1}^k n_j\geq t(p^\ell,d;p^{i_1},\ldots, p^{i_k})$ above is a dimensional requirement for this to hold (see Remark~\ref{rem:dimension}). 

While the product domain is no longer highly connected, one is nonetheless guaranteed a zero for the $k$-tuples of our results by a standard equivariant obstruction argument using characteristic classes. Ultimately one is reduced to a coefficient analysis, depending on the choice of integer partition of $\ell$ as well as that of the $n_j$, of a corresponding monomial of a multivariate polynomial determined by the representation $U(\Z_p^\ell,d)$. These polynomials have previously appeared in various contexts in topological combinatorics, including both Tverberg-type and measure equipartition theory. When $i_1=\cdots=i_\ell=1$ is the all-one partition, the resulting Borsuk-Ulam statement given in Proposition~\ref{prop:B-U} below recovers those which are tacitly and in some cases explicitly known, for instance when $p=2$ using ideal-valued cohomological index theory~\cite{FH98} and a $\Z_2$-identification of $(\Delta_n)^{\times 2}_\Delta$ with a $(n-1)$-dimensional sphere equipped with the antipodal action (see ~\cite{Ma08}). To the best of our knowledge, our equivariance result seems to be absent from the literature for all other non-trivial integer partitions of $\ell$, however, and it is in these cases that one may employ a resulting computational flexibility. 

The above construction is enough to establish all of our admissibility results except that of Theorem~\ref{thm: arbitrary/balanced}(b), which can be reduced to an existing Borsuk-Ulam theorem for dihedral group actions~\cite{FS24}.  Theorem~\ref{thm:vK-F product} follows a similar construction to the one outlined above, except now the representation $U(\Z_p^\ell,d)$ is appended by an additional associated representation $U(\Z_p)^{\oplus \ell}$. For this result we note that products of deleted products (or of deleted joins) were previously used in ~\cite{FS24, HMRS26} in obtaining generalizations of a Tverberg-type theorem due to Sarkaria~\cite{Sa90} of relevance to variants of the mass partition problems discussed above.

\subsection{Configuration Space} Our configuration space is a product version of the deleted product construction commonly used in Tverberg-type theory (see, e.g.,~\cite{Oz87, Vo96, MW15, BZ17}). Recall that for any integer $r\geq 2$ and any integer $n\geq r-1$, the deleted $r$-fold product \[(\Delta_n)^{\times r}_\Delta=\{x=(x_1,\ldots, x_r)\in \Delta_n^{\times r}\mid \mathrm{supp}(x_i)\cap \mathrm{supp}(x_j)=\emptyset \,\,\text{for all}\,\, i\neq j\}\] consists of all $r$-tuples $(x_1,\ldots, x_r)$ of points in $\Delta_n$ which have pairwise disjoint support.  

In what follows, it will be crucial to note that $(\Delta_n)^{\times r}_\Delta$ is $(n-r+1)$-dimensional and $(n-r)$-connected (see, e.g.,~\cite{BBS81, BZ17}). Given an abelian group $G$ of order $r$, one has a free $G$-action on $(\Delta_n)^{\times r}_\Delta$ corresponding to addition in the group. Explicitly, fixing an ordering of $G$ gives an indexing of the elements $x=(x_g)_{g\in G}$ of $(\Delta_n)_\Delta^{\times r}$ by the group $G$. One then defines $h\cdot x=(x_{h+g})_{g\in G}$ for each $h\in G$.  Identifying the vertex set of $\Delta_n$ with $[n+1]$ and letting $n$ grow to infinity allows for the union $EG=\cup_{n\geq r-1} (\Delta_n)^{\times r}_\Delta$ to be taken as the total space of the classifying bundle $G\hookrightarrow EG\rightarrow BG=EG/G$ of the group $G$ (see, e.g, ~\cite{Ha00},~\cite{Hu94}). The quotient $(\Delta_n)^{\times r}_\Delta/G$ is then a subcomplex of the classifying space $BG$. 

For ease of notation, for a given prime number $p$ and integer $\ell\geq 2$ we let $r=p^\ell$, and, corresponding to a non-trivial partition $1\leq i_1\leq \cdots \leq i_k\leq \ell$ of $\ell\geq 2$, for each $j\in [k]$ we let $r_j=p^{i_j}$ and $\mathbf{r}=(r_1,\ldots, r_k)$. Supposing that $\mathbf{n}=(n_1,\ldots, n_k)$ is a sequence of integers satisfying $n_j\geq r_j-1$ for all $j\in [k]$, we take as our configuration space the product \[(\Delta_{\mathbf{n}})^{\times \mathbf{r}}_\Delta:=(\Delta_{n_1})^{\times r_1}_\Delta \times\cdots\times (\Delta_{n_k})^{\times r_k}_\Delta\] of the respective deleted $r_j$-fold products of the $\Delta_{n_j}$. Again for notational simplicity we let $G_j=\Z_p^{i_j}$ for each $j\in [k]$, so that $G=\oplus_{j=1}^k G_j$ is their direct sum. One then has a free $G$-action on $(\Delta_{\mathbf{n}})^{\times \mathbf{r}}_\Delta$ given by letting each $G_j$-factor of $G$ act independently on each $(\Delta_{n_j})^{\times r_j}_\Delta$-factor of $(\Delta_{\mathbf{n}})^{\times \mathbf{r}}_\Delta$. 

\subsection{Target Space} Our target space for admissibility is the same as that used in standard proofs of the original Topological Tverberg theorem, as well as in measure equipartition problems. Namely, let $G=\Z_p^\ell$ as above and let $\R[G]=\{\sum_{g\in G} r_g\, g\mid r_g\in \R\,\,\text{for all}\,\, g\in G\}$ be the  right regular representation (group algebra) for $G$, with $G$-action on coordinates again corresponding to that of $G$ on itself by addition. Thus one has $h\cdot \sum_{g\in G} r_g\, g=\sum_{g\in G} r_{h+g}\, g$ for all $h\in G$ and  $\sum_{g\in G} r_g\,g\in \R[G]$. One then considers the $(r-1)$-dimensional subrepresentation $U(G)=\left\{\sum_{g\in G} r_g\, g\in \R[G]\mid \sum_{g\in G}r_g=0\right\}$ consisting of all elements of $\R[G]$ whose coordinates sum to zero. The representation $\R^d[G]=\{\sum_{g\in G} x_g\,g \mid x_g\in \R^d\,\, \text{for all}\,\, g\in G\}$ and the subrepresentation $U(G,d)=\{\sum_{g\in G} x_g\, g\in \R^d[G]\mid \sum_{g\in G} x_g=0\}$ are defined analogously. Observe that the latter may be identified with the $d$-fold sum $U(G)^{\oplus d}$ of $U(G)$ . 

For our van Kampen--Flores results, we shall append to $U(\Z_p^\ell,d)$ the $\ell$-fold sum $U(\Z_p)^{\oplus \ell}$, which we view as a $\Z_p^\ell$-representation by letting each $\Z_p$-factor of $\Z_p^\ell$ act independently on the corresponding $U(\Z_p)$ factor of $U(\Z_p)^{\oplus \ell}$.  As will be needed later, one may then view each component $U(\Z_p)$ as a $\Z_p^\ell$-representation in its own right via projection onto the $j$-th factor, with action given explicitly by $h \cdot \sum_{g\in \Z_p} r_g\,g=\sum_{g\in \Z_p} r_{h_j+g}\, g$ for each $h=(h_1,\ldots, h_\ell)\in \Z_p^\ell$. The $\Z_p^\ell$-representation $U(\Z_p)^{\oplus \ell}$ is then a direct sum of these $\Z_p^\ell$-representations. 

\subsection{Test Map} Let $\mathbf{r}=(r_1,\ldots, r_k)$ and $\mathbf{n}=(n_1,\ldots, n_k)$ be as above and let $f\colon \Delta_{n_1}\times\cdots \times \Delta_{n_k}\rightarrow \R^d$ be a continuous map. We define a continuous $G$-equivariant map \[F\colon (\Delta_{\mathbf{n}})^{\times \mathbf{r}}_\Delta\rightarrow U(G,d)\] as follows. Namely, for each $x=((x_{g_1})_{g_1\in G_1},\ldots, (x_{g_k})_{g_k\in G_k})\in (\Delta_{\mathbf{n}})^{\times \mathbf{r}}_\Delta$ let $x_g=(x_{g_1},\ldots, x_{g_k})$ and let $\avg(f;x)=\frac{1}{|G|}\sum_{g\in G} f(x_g)$ be the average of the images of the $x_g$. We then define 
\[F(x)=\sum_{g\in G} [f(x_g)-\avg(f;x)]\, g.\] By construction, $F(x)=0$ precisely when the collection $\{\supp(x_{g_1})\times \cdots \times \supp(x_{g_k})\mid (g_1,\ldots, g_k)\in G\}$ is an $r$-Tverberg family of type $(r_1,\ldots, r_k)$. It is likewise clear that this map is $G$-equivariant with respect to the action above. Thus one has the following reduction. 

\begin{proposition}
\label{prop:admissibility}
Let $p$ be a prime number, let $d\geq 1$ be an integer, and let $1\leq i_1\leq\cdots\leq i_k$ be a non-trivial partition of an integer $\ell\geq 2$. A $k$-tuple  $(n_1,\ldots, n_k)$ satisfying $n_j\geq (p^{i_j}-1)(d+1)$ for all $j\in [k]$ and $\sum_{j=1}^k n_j=d(p^\ell-1)+\sum_{j=1}^k (p^{i_j}-1)$ is  $(p^\ell,d;p^{i_1},\ldots, p^{i_k})$-Tverberg admissible provided any $\Z_p^\ell$-equivariant map $(\Delta_{(n_1,\ldots,n_k)})^{\times (p^{i_1},\ldots, p^{i_k})}_\Delta\rightarrow U(\Z_p^\ell,d)$ has a zero. \end{proposition}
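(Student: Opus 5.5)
We argue by contraposition, using the test map $F$ constructed in the preceding subsection. Fix a continuous map $f\colon \Delta_{n_1}\times\cdots\times\Delta_{n_k}\rightarrow \R^d$. Set $r_j=p^{i_j}$, $G_j=\Z_p^{i_j}$ and $G=\oplus_j G_j\cong\Z_p^\ell$. The map
\[F(x)=\sum_{g\in G}[f(x_g)-\avg(f;x)]\,g\]
is defined on $(\Delta_{\mathbf{n}})^{\times\mathbf{r}}_\Delta$. We first check that it is well defined and continuous: each coordinate $f(x_g)$ is continuous in $x$, so $\avg(f;x)$ is continuous as well. The coefficients $f(x_g)-\avg(f;x)$ sum to zero by construction, so $F$ lands in $U(G,d)$. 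The hypothesis $n_j\geq r_j-1$, which follows from $n_j\geq (p^{i_j}-1)(d+1)$, guarantees that each deleted product is non-empty, so the configuration space is non-empty.

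Next we verify $G$-equivariance. Take $h=(h_1,\ldots,h_k)\in G$. Since each $G_j$ acts on its own factor, $(h\cdot x)_g=(x_{h_1+g_1},\ldots,x_{h_k+g_k})=x_{h+g}$. Summing over all $g\in G$ shows $\avg(f;h\cdot x)=\avg(f;x)$, because $g\mapsto h+g$ is a bijection of $G$. Therefore
\[F(h\cdot x)=\sum_{g}[f(x_{h+g})-\avg(f;x)]\,g=h\cdot F(x),\]
which is exactly the action on $U(G,d)$ given earlier.

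Now suppose every $G$-equivariant map to $U(\Z_p^\ell,d)$ has a zero. Then $F(x)=0$ for some $x$, so $f(x_g)=\avg(f;x)$ for all $g$, and the point $\avg(f;x)$ lies in $f(\sigma_g)$ for every $g$. Here we take $\sigma_g=\supp(x_{g_1})\times\cdots\times\supp(x_{g_k})$, with each $\supp(x_{g_j})$ read as the face of $\Delta_{n_j}$ it spans; note $x_g\in\sigma_g$. For each fixed $j$, the faces $\{\supp(x_{g_j})\mid g_j\in G_j\}$ are $r_j$ pairwise disjoint faces of $\Delta_{n_j}$, by the definition of the deleted product. Indexing $G_j$ by $[p^{i_j}]$, the family $\{\sigma_g\}$ is therefore a $p^\ell$-Tverberg family of type $(p^{i_1},\ldots,p^{i_k})$. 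Since $f$ was arbitrary, $(n_1,\ldots,n_k)$ is admissible.

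There is no real obstacle here. The only point needing care is to identify the action of $G=\oplus_j G_j$ on the product configuration space with the additive action of $\Z_p^\ell$ on itself, so that it matches the right regular action on $U(\Z_p^\ell,d)$. This amounts to fixing the isomorphism $G\cong\Z_p^\ell$ and using the same indexing on both sides. The dimension and sum hypotheses on $(n_1,\ldots,n_k)$ are not used in this reduction itself; they only record the range in which the Borsuk--Ulam statement will later be applied.
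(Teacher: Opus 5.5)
Your proposal is correct and follows the paper's own reduction: the paper likewise takes the test map $F(x)=\sum_{g\in G}[f(x_g)-\avg(f;x)]\,g$, observes that it is $G$-equivariant, and notes that a zero of $F$ gives the Tverberg family of type $(p^{i_1},\ldots,p^{i_k})$ formed by the products $\supp(x_{g_1})\times\cdots\times\supp(x_{g_k})$. Two small remarks: your argument is actually direct rather than by contraposition, and you are right that the dimension and sum hypotheses play no role in this reduction.
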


\begin{remark}
\label{rem:dimension} The condition that $\sum_{j=1}^k n_j\geq t(r,d;r_1,\ldots, r_k)=d(r-1)+\sum_{j=1}^k(r_j-1)$ in Proposition~\ref{prop:admissibility} is necessary on dimensional grounds. Assuming otherwise and letting $S(U(G,d))$ be the representation sphere of $U(G,d)$, one would then have that $\dim((\Delta_{\mathbf{n}})^{\times \mathbf{r}}_\Delta)=\sum_{j=1}^k (n_j-r_j+1)\leq d(r-1)-1=\dim S(U(G,d))$. As the sphere is $\dim S(U(G,d))-1$ connected and the action on $(\Delta_{\mathbf{n}})^{\times \mathbf{r}}_\Delta$ is free, one could then equivariantly extend any $G$-equivariant map defined on the $0$-skeleton of $(\Delta_{\mathbf{n}})^{\times\mathbf{r}}_\Delta$ cell-by-cell to a never-vanishing equivariant map $(\Delta_{\mathbf{n}})^{\times \mathbf{r}}_\Delta\rightarrow S(U(G,d))\hookrightarrow U(G,d)$. 
\end{remark}

For Theorem~\ref{thm:vK-F product} (in which case $k=\ell$), we proceed as in ~\cite{BFZ14, FS24,HMRS26} and augment the given map $F\colon (\Delta_{\mathbf{n}})^{\times \mathbf{r}}_\Delta\rightarrow U(\Z_p^\ell,d)$ by a ``constraint map'' $D\colon (\Delta_{\mathbf{n}})^{\times \mathbf{r}}_\Delta\rightarrow U(\Z_p)^{\oplus \ell}$ which forces $\dim \supp (x_{g_j})\leq \lfloor \frac{n_j}{p}\rfloor$ for each $g_j\in \Z_p$ and all $j\in [\ell]$ whenever $x=((x_{g_1})_{g_1\in \Z_p},\ldots,(x_{g_\ell})_{g_\ell\in\Z_p})\in (\Delta_{\mathbf{n}})^{\times \mathbf{r}}_\Delta$ is a zero of both $D$ and the map $F$ above. To that end, view each $\Delta_{n_j}$ as an abstract simplicial complex. Thus for a face $\sigma\in \Delta_{n_j}$ we have that $|\sigma|$ is the size of its vertex set. We now let $\Sigma_j=\{\sigma\subset \Delta_{n_j}\mid |\sigma|\leq \lfloor\frac{n_j+1}{p}\rfloor\}$ be the abstract simplicial subcomplex of $\Delta_{n_j}$ consisting of all faces with at most $\lfloor\frac{n_j+1}{p}\rfloor$ vertices. We let $\|\Sigma_j\|$ denote its geometric realization, which we view as a subspace of $\R^{n_j}$. As before, for each $x\in (\Delta_{\mathbf{n}})^{\times \mathbf{r}}_\Delta$ let $\avg (d;x,\|\Sigma_j\|)=\frac{1}{p}\sum_{g_j\in \Z_p}\dist(x_{g_j},\|\Sigma_j\|)$ be the average distance of the $p$ points of $x_j=(x_{g_j})_{g_j\in \Z_p}\in (\Delta_{n_j})^{\times p}_\Delta$ to $\|\Sigma_j\|$ and define the map $D_j\colon (\Delta_{\mathbf{n}})^{\times \mathbf{r}}_\Delta\rightarrow U(\Z_p)$ by \[D_j(x)=\sum_{g_j\in \Z_p}\left[\dist (x_{g_j},\|\Sigma_j\|)-\avg(d;x,\|\Sigma_j\|)\right]\,g_j.\] By assumption, the faces of each collection $\{\supp(x_{g_j})\mid g_j\in \Z_p\}$ corresponding to $x\in (\Delta_{\mathbf{n}})^{\times \mathbf{r}}_\Delta$ are pairwise disjoint, and therefore by the pigeon-hole principle for each $j\in [\ell]$ there must be some $g_{j_0}\in \Z_p$ with $\supp(x_{g_{j_0}})\in \Sigma_j$. Thus $\supp(x_{g_j})\in \Sigma_j$ for all $g_j\in \Z_p$ provided $D_j(x)=0$. Letting \[D=(D_1,\ldots, D_\ell)\colon (\Delta_{\mathbf{n}})^{\times \mathbf{r}}_\Delta\rightarrow U(\Z_p)^{\oplus \ell},\] we therefore have that $\dim(\supp(x_{g_j}))\leq \lfloor\frac{n_j+1}{p}\rfloor-1=\lfloor \frac{n_j}{p}\rfloor$ for each $g_j\in \Z_p$ and all $j\in [\ell]$ provided $D(x)=0$. Again it is straightforward to show that this map is $\Z_p^\ell$-equivariant, so as before the existence of the desired $p^\ell$-Tverberg family with dimensionally restricted faces corresponds to a zero of the $\Z_p^\ell$-equivariant map $F\oplus D\colon (\Delta_{\mathbf{n}})^{\times \mathbf{r}}_\Delta\rightarrow U(\Z_p^\ell,d)\oplus U(\Z_p)^{\ell}$. We summarize this as follows.

\begin{proposition}
\label{prop:VK-F product}

Let $p$ be a prime number and let $d\geq 1$ and $\ell\geq 2$ be integers. Suppose that $(n_1,\ldots, n_\ell)$ is an $\ell$-tuple satisfying $n_j\geq (p-1)(d+1)$ for all $j\in [\ell]$ and $\sum_{j=1}^\ell n_j=d(p^\ell-1)+\ell(p-1)$. Then any continuous map $f\colon \Delta_{n_1+p-1}\times \cdots \times \Delta_{n_\ell+p-1}\rightarrow \R^d$ admits a $p^\ell$-Tverberg family $\{\sigma^1_{m_1}\times\cdots\times \sigma^\ell_{m_\ell}\mid (m_1,\ldots, m_\ell)\in [p]^\ell\}$ with $\dim(\sigma^j_{m_j})\leq \lfloor\frac{n_j}{p}\rfloor$ for each $m_j\in [p]$ and each $j\in [\ell]$ provided any $\Z_p^\ell$-equivariant map $(\Delta_{(n_1+p-1,\ldots,n_\ell+p-1)})^{\times (p,\ldots, p)}_\Delta\rightarrow U(\Z_p^\ell,d) \oplus U(\Z_p)^{\oplus \ell}$ has a zero.
\end{proposition}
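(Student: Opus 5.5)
The argument is the usual configuration-space/test-map reduction, essentially assembling the construction just described. Set $N_j=n_j+p-1$ and let $f\colon \Delta_{N_1}\times\cdots\times\Delta_{N_\ell}\rightarrow\R^d$ be continuous. We take the configuration space $(\Delta_{\mathbf{N}})^{\times(p,\ldots,p)}_\Delta$ with its free $\Z_p^\ell$-action, where each $\Z_p$-factor cyclically permutes the $p$ points of the corresponding deleted product. On it we consider the map $F\oplus D$, with $F$ the test map of the previous subsection and $D=(D_1,\ldots,D_\ell)$ the constraint map built from the subcomplexes $\Sigma_j$. The $\Sigma_j$ should now be formed in $\Delta_{N_j}$ with vertex bound $\lfloor (N_j+1)/p\rfloor$. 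First I would check that $F\oplus D$ is continuous and $\Z_p^\ell$-equivariant. Continuity holds because $f$, $\dist(\cdot,\|\Sigma_j\|)$ and the averages are continuous. For equivariance, translating by $h$ reindexes $x_g\mapsto x_{h+g}$, which permutes the coordinates of $F$ exactly as in $U(\Z_p^\ell,d)$. It also permutes the coordinates of $D_j$ through $h_j$, which matches the action on the $j$-th copy of $U(\Z_p)$. Both averages are invariant under these permutations.

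By hypothesis, $F\oplus D$ then has a zero $x$. From $F(x)=0$ we get that all $f(x_g)$, $g\in\Z_p^\ell$, coincide. Setting $\sigma^j_{m_j}=\supp(x_{g_j})$ for the $m_j$-th element $g_j$ of $\Z_p$, the faces $\sigma^j_1,\ldots,\sigma^j_p$ are pairwise disjoint by the definition of the deleted product. Since $x_g\in\sigma^1_{g_1}\times\cdots\times\sigma^\ell_{g_\ell}$, the common point $f(x_g)$ lies in every image, so these products form a $p^\ell$-Tverberg family of type $(p,\ldots,p)$.

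For the constraint, use $D_j(x)=0$, which means all distances $\dist(x_{g_j},\|\Sigma_j\|)$ are equal. The $p$ supports are disjoint subsets of the $N_j+1$ vertices, so by pigeonhole at least one has at most $\lfloor (N_j+1)/p\rfloor$ vertices. That point lies in $\|\Sigma_j\|$ and has distance $0$, so every $x_{g_j}$ lies in $\|\Sigma_j\|$. The one point needing care is that a point of a simplex lying in the realization of the subcomplex $\Sigma_j$ has its support in $\Sigma_j$; this holds because the realization is a union of closed faces and the support is the unique face containing the point in its relative interior. Therefore $|\sigma^j_{m_j}|\le\lfloor (n_j+p)/p\rfloor=\lfloor n_j/p\rfloor+1$, which gives $\dim\sigma^j_{m_j}\le\lfloor n_j/p\rfloor$ as claimed.

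This statement is a reduction, not a Borsuk--Ulam theorem, so there is no real obstacle; the substantive zero-existence claim is assumed. The only step needing attention is the bookkeeping between $N_j=n_j+p-1$ and the bound $\lfloor n_j/p\rfloor$. The dimension hypotheses on the $n_j$ play no role here beyond ensuring that the configuration space is non-empty and has the dimension the assumed zero-existence result is calibrated to.
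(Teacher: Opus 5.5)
Your proof is correct and follows the paper's own construction: the test map $F$ is augmented by the constraint map $D$ built from the subcomplexes $\Sigma_j$, and a pigeonhole argument forces every support into $\Sigma_j$ once $D(x)=0$. Your bookkeeping is in fact more careful than the paper's text, since building $\Sigma_j$ in $\Delta_{n_j+p-1}$ with vertex bound $\lfloor (n_j+p)/p\rfloor$ is what yields $\dim\sigma^j_{m_j}\le\lfloor n_j/p\rfloor$, whereas the paper writes $\Sigma_j\subset\Delta_{n_j}$ and the identity $\lfloor (n_j+1)/p\rfloor-1=\lfloor n_j/p\rfloor$, which fails in general (e.g.\ $p=2$, $n_j=4$).
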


\section{Polynomial Criteria for Borsuk-Ulam Statements}
\label{sec:polynomial criteria}

Except for Theorem~\ref{thm: arbitrary/balanced}(b) and its van Kampen extension, all of our results follow from polynomial criteria which guarantee zeros for any $\Z_p^\ell$-equivariant map as in Propositions~\ref{prop:admissibility} and~\ref{prop:VK-F product}. These polynomials are directly determined by the decomposition of the representations $U(\Z_p^\ell,d)$ and $U(\Z_p^\ell,d)\oplus U(\Z_p)^{\oplus \ell}$ into real one-dimensional representations when $p=2$, and of a decomposition of related complex representations into one-dimensional complex representations when $p$ is odd. In what follows, it will be convenient to obtain these polynomial conditions as special cases of the Borsuk-Ulam type statement of Proposition~\ref{prop:B-U} below, which holds for arbitrary finite representations.     

We first recall some basic representation theoretic facts (see, e.g., ~\cite{FH04, Ser77}). Namely, the complex irreducible representations of any finite abelian group $G$ are all one-dimensional and are in bijection with the group itself. Explicitly, let $\omega_r=\exp(2\pi i/r)$ be the standard $r$-th root of unity. If $G=\oplus_{j=1}^\ell \Z_{r_j}$ is the direct sum of cyclic groups $\Z_{r_j}$, then each $\alpha=(\alpha_1,\ldots, \alpha_\ell)\in G$ determines the representation $U_\alpha$ given by letting $G$ act on $\C$ by $g\cdot z=\omega_{r_1}^{g_1\alpha_1}\cdots\omega_{r_\ell}^{g_\ell\alpha_\ell}z$ for each $g=(g_1,\ldots, g_\ell)\in G$ and $z\in \C$. For the group $\Z_2^\ell$, one has the analogous situation that its irreducible real representations are all one-dimensional and again indexed by the group. Explicitly, each $\alpha=(\alpha_1,\ldots, \alpha_\ell)\in \Z_2^\ell$ determines the real representation $U_\alpha$ given by letting $\Z_2^\ell$ act on $\R$ by $g\cdot x= (-1)^{g_1\alpha_1+\cdots+g_\ell\alpha_\ell}x$ for each $g=(g_1,\ldots, g_\ell)\in \Z_2^\ell$ and $x\in \R$.

\begin{proposition} 
\label{prop:B-U}
Let $p$ be a prime number, let $1\leq i_1\leq \cdots \leq i_k$ be a non-trivial partition of an integer $\ell\geq 2$, and let $n_1,\ldots, n_k$ be integers such that $n_j\geq p^{i_j}-1$ for all $j\in [k]$. Let $\mathbf{i}_0=0$ and $\mathbf{i}_j=i_1+\cdots+i_j$ for all $j\in [k]$.

\begin{compactenum}[(a)] 

\item Let $p=2$, let $N_j=n_j-2^{i_j}+1$ for all $j\in [k]$, and let $N=\sum_{j=1}^k N_j$. Suppose that $U$ is a real $N$-dimensional representation with $U\cong \oplus_{i=1}^N U_{\alpha_i}$, where $\alpha_i=(\alpha_{i,1},\ldots, \alpha_{i,\ell})\in \Z_2^\ell$ for each $i\in [N]$, and let \[h_U(x_1,\ldots, x_\ell)=\prod_{i=1}^N (\alpha_{i,1}x_1+\cdots+\alpha_{i,\ell}x_\ell)\in \Z_2[x_1,\ldots, x_\ell].\] Assume that there exist non-negative integers $m_1,\ldots, m_\ell$ satisfying  $m_{\mathbf{i}_{j-1}+1}+\cdots+m_{\mathbf{i}_j}=N_j$ for all $j\in[k]$ such that the coefficient of $x_1^{m_1}\cdots x_\ell^{m_\ell}$ in $h_U(x_1,\ldots, x_\ell)$ equals one. Then any $\Z_2^\ell$-equivariant map $F\colon (\Delta_{(n_1,\ldots, n_k)})^{\times (2^{i_1},\ldots, 2^{i_k})}_\Delta\rightarrow U$ has a zero. 

\item Let $p$ be an odd prime and suppose that each $n_j$ is even. Let $N_j=\frac{n_j-p^{i_j}+1}{2}$ for all $j\in [k]$ and let $N=\sum_{j=1}^k N_j$. Suppose that $U$ is a complex $N$-dimensional representation with $U\cong\oplus_{i=1}^N U_{\alpha_i}$, where $\alpha_i=(\alpha_{i,1},\ldots, \alpha_{i,\ell})\in \Z_p^\ell$ for all $i\in [N]$, and let \[h_U(y_1,\ldots, y_\ell)=\prod_{i=1}^N (\alpha_{i,1}y_1+\cdots+\alpha_{i,\ell}y_\ell)\in \Z_p[y_1,\ldots, y_\ell].\]   Assume that there exist non-negative integers $m_1,\ldots, m_\ell$ satisfying  $m_{\mathbf{i}_{j-1}+1}+\cdots+m_{\mathbf{i}_j}=N_j$ for all $j\in[k]$ such that the coefficient of $y_1^{m_1}\cdots y_\ell^{m_\ell}$ in $h_U(y_1,\ldots, y_\ell)$ is non-zero. Then any $\Z_p^\ell$-equivariant map $F\colon (\Delta_{(n_1,\ldots, n_k)})^{\times (p^{i_1},\ldots,p^{i_k})}_\Delta \rightarrow U$ has a zero. 
\end{compactenum}
\end{proposition}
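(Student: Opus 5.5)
We argue by contradiction using Euler classes and the cohomology of the product of classifying spaces. Write $X=(\Delta_{\mathbf n})^{\times\mathbf r}_\Delta=\prod_{j=1}^k X_j$ with $X_j=(\Delta_{n_j})^{\times p^{i_j}}_\Delta$, on which $G_j=\Z_p^{i_j}$ acts freely. Suppose $F\colon X\to U$ is equivariant and nowhere zero. Then the vector bundle $\xi=X\times_G U\to X/G$ has a nowhere vanishing section, so its mod $p$ Euler class (top Stiefel--Whitney class when $p=2$, top Chern class reduced mod $p$ when $p$ odd) vanishes. Since $X/G=\prod_j X_j/G_j$ and the classifying map $X/G\to BG=\prod_j BG_j$ is a product of the classifying maps $c_j\colon X_j/G_j\to BG_j$, this Euler class is the pullback $c^*e(U)$, where $e(U)\in H^*(BG;\mathbb F_p)$ is the equivariant Euler class of $U$. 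By the splitting $U\cong\oplus U_{\alpha_i}$ and multiplicativity, $e(U)=\prod_i e(U_{\alpha_i})$, and $e(U_{\alpha})=\alpha_1x_1+\cdots+\alpha_\ell x_\ell$ where $x_1,\dots,x_\ell$ are the degree-one generators of $H^*(B\Z_2^\ell;\mathbb F_2)=\mathbb F_2[x_1,\dots,x_\ell]$ (resp. the degree-two Bocksteins $y_a$ in $H^*(B\Z_p^\ell;\mathbb F_p)=\Lambda[u_1,\dots,u_\ell]\otimes\mathbb F_p[y_1,\dots,y_\ell]$, up to a universal nonzero scalar). Hence $e(U)=h_U$, read in the polynomial part of the cohomology, and we must show $c^*h_U\neq 0$.

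The key input is the injectivity range of each $c_j^*$. Since $X_j$ is $(n_j-p^{i_j})$-connected of dimension $N'_j:=n_j-p^{i_j}+1$ with free $G_j$-action, the map $c_j$ may be taken cellular and $(N'_j)$-connected in the sense that $c_j^*\colon H^q(BG_j)\to H^q(X_j/G_j)$ is an isomorphism for $q<N'_j$ and injective for $q=N'_j$ (compare $X_j/G_j\subset BG_j$ as a subcomplex containing the $N'_j$-skeleton up to homotopy; injectivity in the top degree follows from the connectivity via the Serre spectral sequence, as $H^q(X_j)=0$ for $0<q<N'_j$). For $p=2$, $N'_j=N_j$ and $c_j^*$ is injective on $H^{\le N_j}$; for $p$ odd, $N'_j=2N_j$. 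By Künneth over the field $\mathbb F_p$, $c^*=\bigotimes_j c_j^*$, so on the graded piece $\bigotimes_j H^{q_j}(BG_j)$ with each $q_j\le N'_j$ the map $c^*$ is injective. Now $h_U$ is homogeneous of total degree $N=\sum N_j$ (resp. $2N$); grouping variables into blocks $\{x_{\mathbf i_{j-1}+1},\dots,x_{\mathbf i_j}\}$ decomposes $h_U=\sum_{(q_1,\dots,q_k)}h_{(q_j)}$ by multidegree in the blocks. Under the sum constraint, the component with $q_j=N_j$ for all $j$ contains the hypothesized monomial $x_1^{m_1}\cdots x_\ell^{m_\ell}$ with nonzero coefficient, so $h_{(N_1,\dots,N_k)}\neq0$ and $c^*h_{(N_1,\dots,N_k)}\ne0$. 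All other components have some $q_j>N'_j$ (degree), and these are killed since $H^{q}(X_j/G_j)=0$ for $q>\dim X_j/G_j=N'_j$. The surviving components in distinct Künneth summands are independent, so $c^*h_U\neq0$, a contradiction.

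The main obstacle is the top-degree injectivity of $c_j^*$ in degree exactly $N'_j$ (and for $p$ odd, making sure the polynomial subalgebra generated by the $y_a$ maps injectively, with exterior classes not interfering): I would establish it via the Serre spectral sequence of $X_j\to X_j\times_{G_j}EG_j\to BG_j$, where $X_j$ being $(N'_j-1)$-connected forces $H^q(BG_j)\to H^q_{G_j}(X_j)$ injective for $q\le N'_j$, since the first possible differential out of the fiber row lands in degree $N'_j+1$. For the odd case one also needs the parity hypothesis ($n_j$ even) only to match real dimension $2N$ of $U$ with $\dim X$; the complex Euler class argument is otherwise identical.
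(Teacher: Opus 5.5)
Your proof is correct and follows the paper's argument: the top Stiefel--Whitney (resp.\ mod $p$ Chern) class of the Borel bundle is pulled back from $BG=\prod_j BG_j$, the K\"unneth formula identifies the pullback with $\bigotimes_j c_j^*$, each factor map is injective in the top degree, and a tensor product of injective maps over $\Z_p$ is injective. The one difference is how you get top-degree injectivity: you apply the Serre spectral sequence of the Borel fibration directly to $X_j/G_j\to BG_j$, whereas the paper first maps the skeleton $(EG_j)^{M_j}$ equivariantly into $X_j$ and then uses the long exact sequence of the pair $(BG_j,(BG_j)^{M_j})$. You also state explicitly that the other multidegree components of $h_U$ vanish for dimensional reasons, which the paper leaves implicit.
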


\section{Proof of Proposition~\ref{prop:B-U}}
\label{sec:B-U Proof}

We now prove Proposition~\ref{prop:B-U}. This follows from a standard characteristic class argument applied to the vector bundle given by the Borel construction for the space $(\Delta_{\mathbf{n}})^{\times \mathbf{r}}_\Delta$ and the representation $U$. Throughout, we let $\K=\R$ or $\C$. We refer the reader to~\cite{Hu94, MS74} for background on the theory of vector bundles and characteristic classes used below.

\subsection{Characteristic Classes of Representations.} We begin by recalling the mod~$p$ cohomology of the group $\Z_p^\ell$. As is classically known (see, e.g.,~\cite{At61,Th86}), these are intimately related to characteristic classes of their one-dimensional representations. Namely, the Borel construction applied to a $N$-dimensional $\K$-representation $U$ of a given finite group $G$ produces the $N$-dimensional $\K$-vector bundle \[U\hookrightarrow EU:=EG\times_G U\rightarrow BG\] obtained by modding out the trivial bundle $U\hookrightarrow EG\times U\rightarrow EG$ by the diagonal $G$-action. As in ~\cite{At61}, when $\K=\R$ one defines the Stiefel-Whitney classes $w_i(U)\in H^i(BG;\Z_2)$ and total Stiefel-Whitney class $w(U)=1+\sum_{i=1}^N w_i(U)$ of the representation to be those of the vector bundle $EU$, and likewise for the mod $p$ Chern classes $c_i(U)\in H^{2i}(BG;\Z_p)$ and total Chern class $c(U)$ when $\K=\C$. One then has the following description of $H^*(B\Z_p^\ell;\Z_p)$ in terms of the $\K$-representations $U_{\mathbf{e}_j}$ corresponding to the standard basis vectors $\mathbf{e}_j=(0,\ldots, 1,\ldots, 0)\in\Z_p^\ell$.

\begin{theorem}
\label{thm:cohomology}
Let $\ell\geq 1$ be an integer.  
\begin{compactenum}[(a)]
\item $H^\ast(B\Z_2^\ell;\Z_2)=\Z_2[x_1,\ldots, x_\ell]$, where $x_j=w_1(U_{\mathbf{e}_j})$ for each $j\in [\ell]$.
\item Let $p$ be an odd prime. Then $H^\ast(B\Z_p^\ell;\Z_p)\cong \Lambda[x_1,\ldots, x_\ell]\otimes \Z_p[y_1,\ldots, y_\ell]$, where $\Lambda[\cdot]$ is the exterior algebra, $|x_j|=1$ for all $j\in [\ell]$, and $y_j=c_1(U_{\mathbf{e}_j})$ for all $j\in [\ell]$. 
\end{compactenum}
\end{theorem}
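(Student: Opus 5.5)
The plan is to reduce to the case $\ell=1$ by the Künneth formula, and then identify the generators with characteristic classes by pulling back along the coordinate projections. Since $\Z_p^\ell$ is a product of $\ell$ copies of $\Z_p$, we may take $B\Z_p^\ell=(B\Z_p)^{\times \ell}$. Concretely, the product of free contractible $\Z_p$-spaces $(E\Z_p)^{\times\ell}$ is a free contractible $\Z_p^\ell$-space, and any two models of the classifying space are homotopy equivalent. With field coefficients $\Z_p$, and $B\Z_p$ of finite type, the Künneth theorem gives a ring isomorphism
\[H^*(B\Z_p^\ell;\Z_p)\cong H^*(B\Z_p;\Z_p)^{\otimes \ell}.\]
Here the $j$-th tensor factor is the image of $\pi_j^*$, where $\pi_j\colon B\Z_p^\ell\to B\Z_p$ is induced by the projection $\Z_p^\ell\to\Z_p$ onto the $j$-th coordinate. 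For odd $p$, the isomorphism uses the graded tensor product, which is what produces the exterior factor $\Lambda[x_1,\ldots,x_\ell]$.

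For $\ell=1$ I would use the standard models. When $p=2$, take $B\Z_2=\R P^\infty$, so $H^*(\R P^\infty;\Z_2)=\Z_2[x]$ with $|x|=1$. When $p$ is odd, take $B\Z_p=L^\infty=S^\infty/\Z_p$, the infinite lens space, where $\Z_p$ acts on $S^\infty\subset\C^\infty$ by multiplication by $\omega_p$. Its cellular chain complex has one cell in each dimension, with boundary maps alternating between $0$ and multiplication by $p$. This yields $\Z_p$ in each degree. The cup product structure $H^*(L^\infty;\Z_p)=\Lambda[x]\otimes\Z_p[y]$, with $|x|=1$, $|y|=2$ and $y=\beta x$ the Bockstein, is classical. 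One can obtain it, for instance, by comparing with $\C P^\infty$ through the fibration $S^1\to L^\infty\to \C P^\infty$ and its Gysin sequence, whose Euler class is $p$ times the generator and hence vanishes mod $p$.

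The identification of the generators is the step needing care. In the case $p=2$, the line bundle $E\Z_2\times_{\Z_2}U_{\mathbf e_1}$ over $\R P^\infty$ is the tautological bundle, whose $w_1$ is the nonzero class $x$. For odd $p$, the bundle $S^\infty\times_{\Z_p}U_{\mathbf e_1}$ over $L^\infty$ is the pullback of the tautological (or dual) line bundle along $L^\infty\to\C P^\infty$. Its mod $p$ first Chern class is therefore $\pm$ the image of the generator of $H^2(\C P^\infty)$. By the Gysin argument above, this image generates $H^2(L^\infty;\Z_p)$, and a sign or unit does not affect the statement.

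Finally, $U_{\mathbf e_j}$, viewed as a $\Z_p^\ell$-representation, is the pullback of the basic $\Z_p$-representation along the $j$-th projection. By naturality of the Borel construction and of characteristic classes, $w_1(U_{\mathbf e_j})=\pi_j^*x$ and $c_1(U_{\mathbf e_j})=\pi_j^*y$, which are exactly the $j$-th tensor generators. I expect the main obstacle to be the multiplicative structure for odd $p$, namely showing that $y$ is polynomial and that $x^2=0$, rather than just the additive groups. For this I would cite the classical computation (e.g.\ Hatcher), or argue via the Gysin sequence as indicated.
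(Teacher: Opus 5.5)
Your proof is correct. Note that the paper gives no proof of this theorem: it quotes it as classical from Atiyah and Thomas, where it sits inside the general theory relating characteristic classes of representations to the cohomology of finite groups.

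You give instead the standard self-contained derivation:
\begin{itemize}
\item Künneth with field coefficients on the product model $(E\Z_p)^{\times\ell}$, to reduce to $\ell=1$;
\item the explicit computations for $\R P^\infty$ and the infinite lens space $L^\infty$;
\item naturality along the coordinate projections $\pi_j$, to get $x_j=w_1(U_{\mathbf{e}_j})$ and $y_j=c_1(U_{\mathbf{e}_j})$ (the latter up to a unit, which is harmless).
\end{itemize}
The citation buys brevity and generality. Your route buys exactly the compatibility the paper uses implicitly in the proof of Proposition~\ref{prop:B-U}. There $H^\ast(BG;\Z_p)$ for $G=\bigoplus_j G_j$ is identified with $\bigotimes_j H^\ast(BG_j;\Z_p)$, and the variables of the $j$-th block are taken to live in the $j$-th tensor factor. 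That is precisely your statement that the generators are pulled back along coordinate projections.

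Two small remarks.

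First, the Gysin sequence you sketch already settles the multiplicative structure for odd $p$, which you flag as the main obstacle. The Euler class is $\pm p$ times the generator of $H^2(\C P^\infty;\Z)$, so it vanishes mod $p$. The Gysin maps are $H^\ast(\C P^\infty;\Z_p)$-linear, so the sequence splits into short exact sequences. These exhibit $H^\ast(L^\infty;\Z_p)$ as a free $\Z_p[y]$-module on $1$ and a degree-one class $x$. Then $x^2=0$ by graded commutativity, since $2$ is invertible mod $p$. Injectivity of $\pi^\ast$ in degree $2$ is also what shows that $c_1$ of the basic representation generates $H^2(L^\infty;\Z_p)$.

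Second, the paper defines its characteristic classes using the model $EG=\bigcup_n(\Delta_n)^{\times r}_\Delta$. Passing to $(E\Z_p)^{\times\ell}$ should therefore go through a $G$-equivariant map between the two free contractible spaces, so that the Borel bundles correspond. This is automatic from the universal property, and it is the precise sense in which ``any two models are homotopy equivalent'' suffices.
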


Standard properties of characteristic classes imply that the top Stiefel-Whitney (respectively, mod $p$ Chern) class of the given representation $U$ is the polynomial $h_U(x_1,\ldots, x_\ell)$ (respectively, $h_U(y_1,\ldots, y_\ell)$) of Proposition~\ref{prop:B-U}. Considering the case $\K=\R$, we have by assumption that $U=\oplus_{i=1}^N U_{\alpha_i}$, so by the Whitney sum formula the total Stiefel-Whitney class $w(U)$ is given by $w(U)=\prod_{i=1}^N w(U_{\alpha_i})=\prod_{i=1}^N (1+w_1(U_{\alpha_i}))$. On the other hand, one has vector bundle isomorphisms $EU_{\alpha_i}\cong \otimes_{j=1}^\ell EU_{\mathbf{e}_j}^{\otimes \alpha_{i,j}}$ for each $\alpha_i=(\alpha_{i,1},\ldots, \alpha_{i,\ell})$. Since $w_1(E_1\otimes E_2)=w_1(E_1)+w_1(E_2)$ for any two real line bundles $E_1$ and $E_2$, it follows that $w_1(U_{\alpha_i})=\alpha_{i,1}x_1+\cdots+\alpha_{i,\ell}x_\ell$. Thus $w_N(U)=\prod_{i=1}^N (\alpha_{i,1}x_1+\cdots+\alpha_{i,\ell}x_\ell)=h_U(x_1,\ldots, x_\ell)$ as claimed. When $\K=\C$ the analogous properties of Chern classes of complex vector bundles give $c_N(U)=\prod_{i=1}^N (\alpha_{i,1}y_1+\cdots+\alpha_{i,\ell}y_\ell)=h_U(y_1,\ldots, y_\ell)$ as well.

\subsection{Reduction to Tensor Products}

For the $n_j$ and $N_j$ as given in the statement of Proposition~\ref{prop:B-U} we let $M_j=N_j$ when $\K=\R$, $M_j=2N_j$ when $\K=\C$, and $M=\sum_{j=1}^k M_j$ in both cases. As discussed in Section~\ref{sec:CS/TM}, each $(\Delta_{n_j})^{\times r_j}_\Delta$ is $M_j$-dimensional and $(M_j-1)$-connected. Letting $(EG_j)^{M_j}$ denote the $M_j$-dimensional skeleton of the total space $EG_j$ of the classifying bundle discussed there, by elementary equivariant obstruction theory there is a $G_j$-equivariant map $(EG_j)^{M_j}\rightarrow (\Delta_{n_j})^{\times r_j}_\Delta$ which extends the identity map on  $(\Delta_{n_j})^{\times r_j}_\Delta$. Taking products, one therefore has a $G$-equivariant map from \[E_M(G):=(EG_1)^{M_1}\times\cdots \times (EG_k)^{M_k}\]to $(\Delta_{\mathbf{n}})^{\times \mathbf{r}}_\Delta$, and so Proposition~\ref{prop:B-U} is reduced to showing that any equivariant map $F\colon E_M(G)\rightarrow U$ has a zero. We let $B_M(G):=E_M(G)/G=(BG_1)^{M_1}\times \cdots \times (BG_k)^{M_k}$ be the product of the $M_j$-dimensional skeletons of the respective classifying spaces $BG_j$.

Let $E_M(U)=E_M(G)\times_G U$ denote the total space of the $N$-dimensional $\K$-vector bundle \[U\hookrightarrow E_M(G)\times_G U\rightarrow B_M(G),\] which as before is induced by the diagonal $G$-action on the trivial bundle $U\hookrightarrow E_M(G)\times U\rightarrow E_M(G)$. Observe that $E_M(U)=i^\ast({EU})$ is the pullback of the bundle $EU$ above under the inclusion $i\colon B_M(G)\hookrightarrow BG$, which itself is the product of the inclusions $i_j\colon (BG_j)^{M_j}\hookrightarrow BG_j$. 

Any $G$-equivariant map  $F\colon E_M(G)\rightarrow U$ gives rise to a section $s\colon B_M(G)\rightarrow E_M(U)$ of the vector bundle $E_M(U)$ induced by the section $\widetilde{s}\colon E_M(G)\rightarrow E_M(G)\times U$ of the trivial bundle given by $\widetilde{s}(x)=(x,F(x))$. The map $F$ has a zero if and only if the section $s$ does. On the other hand, the existence of any non-vanishing section of $E_M(U)$ would require the corresponding top-dimensional characteristic classes -- that is, the Stiefel-Whitney class $w_N(E_N(U))\in H^N(B_N(\Z_2^\ell);\Z_2)$ when $U$ is real, and likewise the mod $p$ Chern class $c_N(E_{2N}(U))\in H^{2N}(B_{2N}(\Z_p^\ell);\Z_p)$ when $U$ is complex -- to be zero. Thus it suffices to show that these respective classes are in fact non-zero. To do so, one may use the naturality of characteristic classes as applied to the inclusion $i\colon B_M(G)\rightarrow BG$ above, which gives that $w_N(E_N(U))=i^\ast(w_N(U))$ in the real case and likewise $c_N(E_{2N}(U))=i^\ast(c_N(U))$ in the complex setting. 

We now use a  K\"unneth formula argument. As coefficients are in a field we have $H^\ast(B_M(G);\Z_p)\cong \otimes_{j=1}^k H^\ast((BG_j)^{M_j};\Z_p)$ and $H^\ast(BG;\Z_p)\cong \otimes_{j=1}^kH^\ast(BG_j;\Z_p)$ (see, e.g.,~\cite{Ha00}). By the naturality of the K\"unneth formula, the homomorphism $i^\ast\colon H^\ast(BG;\Z_p)\rightarrow H^\ast(B_M(G);\Z_p)$ may be identified with the tensor product $\otimes_{j=1}^k H^\ast(BG_j;\Z_p)\rightarrow \otimes_{j=1}^k H^*((BG_j)^{M_j};\Z_p)$ of the homomorphisms $i_j^\ast\colon H^\ast (BG_j;\Z_p)\rightarrow H^\ast((BG_j)^{M_j};\Z_p)$. On the other hand, the long exact sequence of the pair $(BG_j,(BG_j)^{M_j})$ implies that each homomorphism $i_j^\ast\colon H^{M_j}(BG_j;\Z_p)\rightarrow H^{M_j}((BG_j)^{M_j};\Z_p)$ is injective. As these homomorphisms are of vector spaces, the corresponding tensor map

\[\otimes_{j=1}^k H^{M_j}(BG_j;\Z_p)\rightarrow \otimes_{j=1}^k H^{M_j}((BG_j)^{M_j};\Z_p)\] is injective as well. 

To complete the proof, consider the integers $m_1,\ldots, m_\ell$ in the statement of Proposition~\ref{prop:B-U}. Recall that $\mathbf{i}_0=0$ and  $\mathbf{i}_j=i_1+\cdots+i_j$ for all $j\in [k]$. Considering the real case first, we have $w_N(U)=h_U(x_1,\ldots,x_\ell)\in \Z_2[x_1,\ldots, x_\ell]\cong \otimes_{j=1}^k \Z_2[x_{\mathbf{i}_{j-1}+1},\ldots, x_{\mathbf{i}_j}]$. By assumption, for each $j\in [k]$ we have $m_{\mathbf{i}_{j-1}+1}+\cdots +m_{\mathbf{i}_j}=N_j$ and so that $i_j^\ast(x_{\mathbf{i}_{j-1}+1}^{m_{\mathbf{i}_{j-1}+1}}\cdots x_{\mathbf{i}_j}^{m_{\mathbf{i}_j}})\neq 0$. It then follows from the injectivity of the tensor map above that $i^\ast(x_1^{m_1}\cdots x_\ell^{m_\ell})$ is non-zero. Since the coefficient of $x_1^{m_1}\cdots x_\ell^{m_\ell}$ in $h_U(x_1,\ldots, x_\ell)$ is equal to one by assumption, we conclude that $w_N(E_N(U))=i^*(w_N(U))$ is non-zero. This completes the proof of Proposition~\ref{prop:B-U}(a).

The argument when $p$ is odd is identical; one only needs to replace the variables $x_j$ by $y_j$. As before, the injectivity of the tensor map gives  $i^\ast(y_1^{m_1}\cdots y_\ell^{m_\ell})\neq 0$, and since the coefficient of $y_1^{m_1}\cdots y_\ell^{m_\ell}$ in $c_N(U)=h_U(y_1,\ldots, y_\ell)$ is non-zero by assumption we have $c_N(E_{2N}(U))=i^\ast (c_N(U))\neq 0$.  

\section{Proofs of Theorems~\ref{thm:main}--~\ref{thm: arbitrary/balanced} and Theorem~\ref{thm:vK-F product}}
\label{sec:polynomial proofs}

 We now specialize Proposition~\ref{prop:B-U} to the polynomials corresponding to the representations associated to Propositions~\ref{prop:admissibility} and ~\ref{prop:VK-F product}. 

\subsection{Polynomials for the case $p=2$} The polynomial $h_{U(\Z_2^\ell,d)}(x_1,\ldots, x_\ell)$ is well-known, for instance from its use (see, e.g, ~\cite{MLVZ06,BCCD23,FS24}) in the context of the Gr\"unbaum--Hadwiger--Ramos hyperplane equipartition problem above and its relatives. Nonetheless, we  provide a brief derivation for the sake of completeness. Namely, as the real regular representation $\R[\Z_2^\ell]$ decomposes as the sum of all real irreducible $\Z_2^\ell$-representations (see, e.g., ~\cite{Ser77,FH04}), one has that $U(\Z_2^\ell)\cong \oplus_{\alpha\in \Z_2^\ell\setminus\{0\}}U_\alpha$ and therefore that $U(\Z_2^\ell,d)\cong \oplus_{\alpha\neq 0} U_\alpha^{\oplus d}$. Owing to the explicit calculation $\prod_{(\alpha_1,\ldots, \alpha_\ell)\neq 0} (\alpha_1 x_1+\cdots+\alpha_\ell x_\ell)=\sum_{\sigma\in\mathfrak{S}_\ell}x_{\sigma(1)}^{2^{\ell-1}}x_{\sigma(2)}^{2^{\ell-2}}\cdots x_{\sigma(\ell)}$ given in ~\cite{Wi83}, one has \begin{equation} 
\label{eqn:p=2} h_{U(\Z_2^\ell,d)}(x_1,\ldots, x_\ell)=\left(\sum_{\sigma\in\mathfrak{S}_\ell}x_{\sigma(1)}^{2^{\ell-1}}\cdots x_{\sigma(\ell)}\right)^d.\end{equation} Viewing $U(\Z_2)$ as a $\Z_2^\ell$-representation via projection onto the $j$-th coordinate as in Section~\ref{sec:CS/TM}, one has that $U(\Z_2)\cong U_{\mathbf{e}_j}$ and therefore that $U(\Z_2)^{\oplus \ell}\cong \oplus_{j=1}^\ell U_{\mathbf{e}_j}$. Thus $h_{U(\Z_2)^{\oplus \ell}}(x_1,\ldots, x_\ell)=x_1\cdots x_\ell$ and so the relevant polynomial for Proposition~\ref{prop:VK-F product} is  \begin{equation} 
\label{eqn:p=2 VK} h(x_1,\ldots, x_\ell)=x_1\cdots x_\ell\left(\sum_{\sigma\in\mathfrak{S}_\ell}x_{\sigma(1)}^{2^{\ell-1}}\cdots x_{\sigma(\ell)}\right)^d.\end{equation} 

\subsection{Polynomials for odd $p$} One has analogous decompositions of the representations corresponding to Propositions~\ref{prop:admissibility} and ~\ref{prop:VK-F product} when $p$ is an odd prime, as follows from identifying $U(\Z_p^\ell,d)$ and $U(\Z_p)^{\oplus \ell}$ with certain complex representations. The former was done for instance in~\cite{Si15} and the latter in~\cite{HMRS26} in the context of certain complex relatives of the Gr\"unbaum--Hadwiger--Ramos hyperplane equipartition problem. Again in the interest of self-containment we explain their derivations here. So let $\C[\Z_p^\ell]=\{\sum_{g\in \Z_p^\ell}z_g\, g\mid z_g\in \C\}$ be the complex right regular representation of $\Z_p^\ell$ and let $U_{\mathbb{C}}(\Z_p^\ell)=\{\sum_{g\in \Z_p^\ell} z_g\, g\in \C[\Z_p^\ell]\mid \sum_{g\in \Z_p^\ell} z_g=0\}$ be the subrepresentation of those elements whose coefficients sum to zero. Again one has the decomposition $\C[\Z_p^\ell]\cong \oplus_{\alpha\in \Z_p^\ell} U_\alpha$ of complex representations (see ~\cite{Ser77, FH04}), and so that $U_\C(\Z_p^\ell)\cong \oplus_{\alpha\neq 0} U_\alpha$ as before. Now, one may view any finite dimensional complex representation $W$ as a real representation $W_\R$ of twice the dimension. In our case, complex conjugation shows that $(U_\alpha)_\R$ and $(U_{-\alpha})_\R$ are isomorphic for any $\alpha\in \Z_p^\ell$, while $(U_\C(\Z_p^\ell))_\R\cong U(\Z_p^\ell)^{\oplus 2}$. Letting $A$ be the subset of $\Z_p^\ell\setminus\{0\}$ consisting of all $\alpha\neq 0$ whose last non-zero coordinate lies in $[(p-1)/2]$, it follows that \[U(\Z_p^\ell)^{\oplus 2}\cong (U_\C(\Z_p^\ell))_\R\cong (\oplus_{\alpha\neq 0} U_\alpha)_\R\cong (\oplus_{\alpha\in A} U_\alpha)^{\oplus 2}.\] Thus the real representation $U(\Z_p^\ell)$ may be identified with the complex representation $\oplus_{\alpha\in A}U_\alpha$, and so $U(\Z_p^\ell,d)$ may be identified with $U_A(\Z_p^\ell,d):=\oplus_{\alpha\in A} U_\alpha^{\oplus d}$. 

For each $k\in [(p-1)/2]$, let $A_k\subset A$ be the subset consisting of those $\alpha\neq 0$ whose last non-zero coordinate is equal to $k$. Thus $A=\cup_{k=1}^{(p-1)/2} A_k$, and we have $\prod_{(\alpha_1,\ldots, \alpha_\ell)\in A_k}(\alpha_1y_1+\cdots+\alpha_\ell y_\ell)=k^{(p^\ell-1)/(p-1)}\prod_{(\alpha_1,\ldots, \alpha_\ell)\in A_1}(\alpha_1y_1+\cdots+\alpha_\ell y_\ell)$ for each $k\in [(p-1)/2]$. As before, one has the explicit formula $\prod_{\alpha\in A_1}(\alpha_1y_1+\cdots+\alpha_\ell y_\ell)=\sum_{\sigma\in \mathfrak{S}_\ell}\sgn(\sigma)y_{\sigma(1)}^{p^{\ell-1}}\cdots y_{\sigma(\ell)}$ (see ~\cite{Wi83}). Thus $h_{U_A(\Z_p^\ell,d)}(y_1,\ldots, y_\ell)$ is a non-zero constant multiple of \begin{equation}\label{eqn: p odd} h(y_1,\ldots, y_\ell):= \left(\sum_{\sigma\in \mathfrak{S}_\ell}\sgn(\sigma)y_{\sigma(1)}^{p^{\ell-1}}\cdots y_{\sigma(\ell)}\right)^{d(p-1)/2},\end{equation} where the constant is explicitly given by $[((p-1)/2)!]^{d(p^\ell-1)/(p-1)}\in \Z_p$.

In a similar fashion, the real-$\Z_p^\ell$ representation $U(\Z_p)$ given by projection onto the $j$-th coordinate discussed in Section~\ref{sec:CS/TM} can be identified with the complex representation $\oplus_{a=1}^{(p-1)/2} U_{a\mathbf{e}_j}$, and so $U(\Z_p)^{\oplus \ell}$ identifies with $\oplus_{j=1}^\ell \bigoplus_{a=1}^{(p-1)/2} U_{a\mathbf{e}_j}$. Thus the relevant polynomial for the odd prime cases of Proposition~\ref{prop:VK-F product} is a non-zero constant multiple of 
\begin{equation}\label{eqn:p odd VK} h(y_1,\ldots, y_\ell)=y_1^{(p-1)/2}\cdots y_\ell^{(p-1)/2}\left(\sum_{\sigma\in \mathfrak{S}_\ell}\sgn(\sigma)y_{\sigma(1)}^{p^{\ell-1}}\cdots y_{\sigma(\ell)}\right)^{d(p-1)/2}.
\end{equation} Here one may check that the explicit constant is now given by $[((p-1)/2)!]^{\ell+d(p^\ell-1)/(p-1)}$.\\

Using the polynomial calculations above we now give proofs of all of our results except Theorem~\ref{thm: arbitrary/balanced}(b) and its van Kampen--Flores extension.

\subsection{Proof of Theorem~\ref{thm:main}} We begin with a proof of our main result, in which case the coefficient analysis is particularly simple. 

\begin{proof}[Proof of Theorem~\ref{thm:main}] Let $1\leq i_1\leq\cdots\leq i_k$ be a non-trivial partition of $\ell$ and suppose that $C_1,\ldots, C_k$ are disjoint subsets of $[\ell]$ of size $i_j$ each. We let $n_j=d(p-1)\cdot \sum_{a\in C_j} p^{a-1}+p^{i_j}-1$.

When $p=2$, consider $h_{U(\Z_2^\ell,d)}(x_1,\ldots, x_\ell)=(\sum_{\sigma\in \mathfrak{S}_\ell}x_{\sigma(1)}^{2^{\ell-1}}\cdots x_{\sigma(\ell)})^d$. The coefficient of the monomial $x_1^{d\cdot2^{{\varphi(1)-1}}}\cdots x_{\ell}^{d\cdot 2^{\varphi(\ell)-1}}$ equals one for any $\varphi\in \mathfrak{S}_\ell$. To complete the proof we only need let $m_i=d\cdot 2^{\varphi(i)-1}$ for all $i\in [\ell]$, where $\varphi$ is any permutation which satisfies $\{\varphi(\mathbf{i}_{j-1}+1),\ldots, \varphi(\mathbf{i}_j)\}=C_j$ for all $j\in [k]$. We then have $m_{\mathbf{i}_{j-1}+1}+\cdots+m_{\mathbf{i}_j}=d\cdot \sum_{a\in C_j}2^{a-1}=n_j-2^{i_j}+1$ for all $j\in [k]$, so $(n_1,\ldots, n_k)$ is $(2^\ell,d;2^{i_1},\ldots, 2^{i_k})$-Tverberg admissible by Propositions~\ref{prop:B-U}(a) and~\ref{prop:admissibility}.

For odd primes $p$ we have as above that the monomial $y_1^{d(p-1)/2\cdot p^{{\varphi(1)-1}}}\cdots y_{\ell}^{d(p-1)/2\cdot p^{\varphi(\ell)-1}}$ of $h(y_1,\ldots, y_\ell)=(\sum_{\sigma\in \mathfrak{S}_\ell}\sgn(\sigma)y_{\sigma(1)}^{p^{\ell-1}}\cdots y_{\sigma(\ell)})^{d(p-1)/2}$ has coefficient equal to $\pm 1$ for any $\varphi\in \mathfrak{S}_\ell$, and therefore that the coefficient of this monomial in $h_{U_A(\Z_p^\ell,d)}(y_1,\ldots, y_\ell)$ is non-zero. Choosing $\varphi\in \mathfrak{S}_\ell$ as before, we let $m_i=\frac{d(p-1)}{2}\cdot p^{\varphi(i)-1}$ for all $i\in [\ell]$. Then $m_{\mathbf{i}_{j-1}+1}+\cdots+m_{\mathbf{i}_j}=(n_j-p^{i_j}+1)/2$ for all $j\in [k]$, so  $(n_1,\ldots, n_k)$ is $(p^\ell,d;p^{i_1},\ldots, p^{i_k})$-Tverberg admissible by Propositions~\ref{prop:B-U}(b) and Proposition~\ref{prop:admissibility}.
\end{proof}

\subsection{Proof of Theorem~\ref{thm:p=2 general}} We now prove our other admissibility result for arbitrary non-trivial integer partitions, which holds when $p=2$. In this case our analysis of $h_{U(\Z_2^\ell,d)}(x_1,\ldots, x_\ell)$ is based on the decomposition of $d$ into the form $d=2^{s_1}+t_1$ with $s_1\geq 0$ and $0\leq t_1<2^{s_1}$ and follows that of ~\cite{Si19} used in the context of constrained versions of the Gr\"unbaum--Hadwiger--Ramos problem.

We first need the following lemma. 

\begin{lemma}
\label{lem:decreasing} The sequence $u_1,\ldots, u_\ell$ in the statement of Theorem~\ref{thm:p=2 general} is strictly decreasing. 
\end{lemma}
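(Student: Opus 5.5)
We need $u_1>u_2>\cdots>u_\ell$, where $u_j=2^{s_j+\ell-j}+t_j$ for $j\leq \ell-1$ and $u_\ell=2^{s_{\ell-1}}+2t_{\ell-1}$. The plan is to first pin down how the pair $(s_j,t_j)$ evolves under the recursion $2^{s_{j-1}}+2t_{j-1}=2^{s_j}+t_j$, and then compare consecutive terms directly.

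\emph{Step 1: the recursion.} Write $D_j:=2^{s_j}+t_j$. Since $0\leq t_{j-1}<2^{s_{j-1}}$, we have $2^{s_{j-1}}\leq D_j<2^{s_{j-1}+1}+2^{s_{j-1}}$, so $s_j\in\{s_{j-1},s_{j-1}+1\}$. There are two cases.
\begin{compactenum}[(i)]
\item If $2t_{j-1}<2^{s_{j-1}}$, then $s_j=s_{j-1}$ and $t_j=2t_{j-1}$.
\item If $2t_{j-1}\geq 2^{s_{j-1}}$, then $s_j=s_{j-1}+1$ and $t_j=2t_{j-1}-2^{s_{j-1}}$, which lies in $[0,2^{s_{j-1}})$.
\end{compactenum}
In case (ii) this gives $t_j<2^{s_{j-1}}=2^{s_j-1}$.

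\emph{Step 2: compare $u_{j-1}$ with $u_j$ for $2\leq j\leq \ell-1$.} Here $u_{j-1}=2^{s_{j-1}+\ell-j+1}+t_{j-1}$ and $u_j=2^{s_j+\ell-j}+t_j$.
\begin{compactenum}[(i)]
\item In case (i), $u_{j-1}-u_j=2^{s_{j-1}+\ell-j}-t_{j-1}$. This is positive because $t_{j-1}<2^{s_{j-1}}\leq 2^{s_{j-1}+\ell-j}$.
\item In case (ii), the powers of two are equal: $2^{s_{j-1}+\ell-j+1}=2^{s_j+\ell-j}$. So $u_{j-1}-u_j=t_{j-1}-t_j=t_{j-1}-2t_{j-1}+2^{s_{j-1}}=2^{s_{j-1}}-t_{j-1}>0$.
\end{compactenum}

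\emph{Step 3: the last comparison.} We have $u_{\ell-1}=2^{s_{\ell-1}+1}+t_{\ell-1}$ and $u_\ell=2^{s_{\ell-1}}+2t_{\ell-1}$. Their difference is $2^{s_{\ell-1}}-t_{\ell-1}>0$, since $t_{\ell-1}<2^{s_{\ell-1}}$.

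When $\ell=2$, Step 2 is vacuous and Step 3 applies with $j=1$. The only point needing care is the case split in Step 1, in particular checking that the $t_j$ it produces satisfy $0\leq t_j<2^{s_j}$, so that the definitions are consistent. After that the comparisons are immediate.
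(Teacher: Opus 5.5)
Your proposal is correct and follows the paper's proof: the paper makes the same two-case split on the recursion (phrased as $t_j<2^{s_j-1}$ versus $t_j=2^{s_j-1}+a_j$, i.e.\ your $2t_j<2^{s_j}$ versus $2t_j\ge 2^{s_j}$). It then compares consecutive terms directly in each case and treats $u_{\ell-1}>u_\ell$ separately, exactly as you do. Your version computes the differences explicitly and checks $0\le t_j<2^{s_j}$ uniformly, which is slightly tidier but not a different route.
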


\begin{proof} We recall that the numbers $s_2,\ldots s_{\ell-1}$ and $t_2,\ldots, t_{\ell-1}$ are defined recursively for $1\leq j \leq \ell-2$ by $2^{s_{j+1}}+t_{j+1}=2^{s_j}+2t_j$, where $s_{j+1}\geq 0$ and $0\leq t_{j+1}<2^{s_{j+1}}$. Correspondingly, we have $u_j=2^{s_j+\ell-j}+t_j$ for each $1\leq j \leq \ell-1$ and $u_\ell=2^{s_{\ell-1}}+2t_{\ell-1}$. 

As $u_{\ell-1}=2^{s_{\ell-1}+1}+t_{\ell-1}=2^{s_{\ell-1}}+2^{s_{\ell-1}}+t_{\ell-1}>2^{s_{\ell-1}}+2t_{\ell-1}=u_\ell,$ we are left to show that $u_j>u_{j+1}$ for all $1\leq j \leq \ell-2$. Here there are two cases, namely whether $0\leq t_j<2^{s_j-1}$, or else that $t_j= 2^{s_j-1}+a_j$ for some integer $0\leq a_j<2^{s_j-1}$. In the first case one has $s_{j+1}=s_j$ and $t_{j+1}=2t_j$, and therefore that
\[u_j=2^{s_j+\ell-j}+t_j=2^{s_j+\ell-j-1}+2^{s_j+\ell-j-1}+t_j> 2^{s_j+\ell-j-1}+2^{s_j}+t_j>2^{s_j+\ell-j-1}+2t_j=u_{j+1}.\] In the second case we have $s_{j+1}=s_j+1$ and $t_{j+1}=2a_j<t_j$. Thus \[u_j=2^{s_j+\ell-j}+t_j=2^{s_{j+1}+\ell-j-1}+t_j>2^{s_{j+1}+\ell-j-1}+t_{j+1}=u_{j+1},\] which completes the proof. \end{proof}

\begin{proof}[Proof of Theorem~\ref{thm:p=2 general}] We have \begin{align*} h_{U(\Z_2^\ell,d)}(x_1,\ldots, x_\ell)=& (\sum_{\sigma\in \mathfrak{S}_\ell}x_{\sigma(1)}^{2^{\ell-1}}\cdots x_{\sigma(\ell)})^{2^{s_1}+t_1}\\
=& (\sum_{\sigma\in \mathfrak{S}_\ell}x_{\sigma(1)}^{2^{s_1+\ell-1}}\cdots x_{\sigma(\ell)}^{2^{s_1}})\cdot (\sum_{\tau\in \mathfrak{S}_\ell}x_{\tau(1)}^{2^{\ell-1}}\cdots x_{\tau(\ell)})^{t_1}.
\end{align*}

We first show that the coefficient of $x_1^{u_1}\cdots x_\ell^{u_\ell}$ in $h_{U(\Z_2^\ell,d)}(x_1,\ldots, x_\ell)$ is one by showing that $h_{U(\Z_2^\ell,d)}(x_1,\ldots, x_\ell)=x_1^{u_1}\cdots x_\ell^{u_\ell}$ in the truncated polynomial ring $\Z_2[x_1,\ldots, x_\ell]/(x_1^{u_1+1},\ldots, x_\ell^{u_\ell+1})$. The same argument then shows that the coefficient of  $x_1^{u_{\varphi(1)}}\cdots x_\ell^{u_{\varphi(\ell)}}$ in $h_{U(\Z_2^\ell,d)}(x_1,\ldots, x_\ell)$ is also one for any $\varphi\in \mathfrak{S}_\ell$. 

In the truncated ring we have $x_1^{u_1+1}=0$, and also $x_j^{u_1}=0$ for all $2\leq j\leq \ell$ by Lemma~\ref{lem:decreasing}. Letting $\mathfrak{S}_{\ell-i}$ denote the symmetric group on $\{i+1,\ldots, \ell\}$ for any $i\in [\ell]$, it follows that 

\begin{align*} h_{U(\Z_2^\ell,d)}(x_1,\ldots, x_\ell)&= x_1^{u_1}\cdot (\sum_{\sigma\in \mathfrak{S}_{\ell-1}}x_{\sigma(2)}^{2^{s_1+\ell-2}}\cdots x_{\sigma(\ell)}^{2^{s_1}})\cdot (\sum_{\tau\in \mathfrak{S}_{\ell-1}}x_{\tau(2)}^{2^{\ell-1}}\cdots x^2_{\tau(\ell)})^{t_1}.\end{align*} 

If $\ell=2$ we have $h_{U(\Z_2^2,d)}(x_1,x_2)=x_1^{u_1}x_2^{u_2}$, and otherwise we have 
\begin{align*} h_{U(\Z_2^\ell,d)}(x_1,\ldots, x_\ell)&=  x_1^{u_1}\cdot (\sum_{\sigma\in \mathfrak{S}_{\ell-1}}x_{\sigma(2)}^{2^{\ell-2}}\cdots x_{\sigma(\ell)})^{2^{s_1}+2t_1}\\
&=x_1^{u_1}\cdot (\sum_{\sigma\in \mathfrak{S}_{\ell-1}}x_{\sigma(2)}^{2^{\ell-2}}\cdots x_{\sigma(\ell)})^{2^{s_2}+t_2}\\
&= x_1^{u_1}\cdot (\sum_{\sigma\in \mathfrak{S}_{\ell-1}}x_{\sigma(2)}^{2^{s_2+\ell-2}}\cdots x_{\sigma(\ell)}^{2^{s_2}})\cdot (\sum_{\tau\in \mathfrak{S}_{\ell-1}}x_{\tau(2)}^{2^{\ell-2}}\cdots x_{\tau(\ell)})^{t_2}.
\end{align*}

We now proceed inductively. Again $x_2^{u_2+1}=0$ and $x_j^{u_2}=0$ for all $3\leq j\leq \ell$ by Lemma~\ref{lem:decreasing}, so the same computation as above gives \[h_{U(\Z_2^\ell,d)}(x_1,\ldots, x_\ell)=x_1^{u_1}x_2^{u_2}\cdot (\sum_{\sigma\in \mathfrak{S}_{\ell-2}}x_{\sigma(3)}^{2^{s_2+\ell-3}}\cdots x_{\sigma(\ell)}^{2^{s_2}})\cdot (\sum_{\tau\in \mathfrak{S}_{\ell-2}}x_{\tau(3)}^{2^{\ell-2}}\cdots x^2_{\tau(\ell)})^{t_2}.\]
Continuing gives $h_{U(\Z_2^\ell,d)}(x_1,\ldots, x_\ell)=x_1^{u_1}\cdots x_\ell^{u_\ell}$ in $\Z_2[x_1,\ldots, x_\ell]/(x_1^{u_1+1},\ldots, x_\ell^{u_\ell+1})$. Note that this calculation also shows that $u_1+\cdots+u_\ell=d(2^\ell-1)$.

We now consider the given non-trivial partition $1\leq i_1\leq \cdots\leq i_k$ of $\ell$ and disjoint subsets $C_1,\ldots, C_k$ of $[\ell]$ of size $i_j$ each. Let $n_j=\sum_{a\in C_j} u_a+2^{i_j}-1$ for all $j\in [k]$. As in the proof of Theorem~\ref{thm:main}, we let $\varphi\in \mathfrak{S}_\ell$ be such that $\{\varphi(\mathbf{i}_{j-1}+1),\ldots, \varphi(\mathbf{i}_j)\}=C_j$ for all $j\in [k]$ and we define $m_i=u_{\varphi(i)}$ for all $i\in [\ell]$. As there, we have that $m_{\mathbf{i}_{j-1}+1}+\cdots+m_{\mathbf{i}_j}=N_j:=n_j-2^{i_j}+1$ for all $j\in [k]$. On the other hand, the polynomial calculation above establishes that $\sum_{j=1}^k n_j=d(2^\ell-1)+\sum_{j=1}^k (2^{i_j}-1)=t(2^\ell,d;2^{i_1},\ldots, 2^{i_k})$. Since the coefficient of $x_1^{m_1}\cdots x_\ell^{m_\ell}$ in $h_{U(\Z_2^\ell,d)}(x_1,\ldots, x_\ell)$ is equal to one, we conclude from Propositions~\ref{prop:B-U}(a) and~\ref{prop:admissibility} that the $k$-tuple $(n_1,\ldots, n_k)$ is $(2^\ell,d;2^{i_1},\ldots, 2^{i_k})$-Tverberg admissible. \end{proof}

\subsection{Proof of Theorem~\ref{thm: arbitrary/balanced}(a)(c)(d)}

We now prove our admissibility results for $p^{2\ell}$-Tverberg tuples of type $(p^\ell,p^\ell)$, except that given by Theorem~\ref{thm: arbitrary/balanced}(b). Parts (a) and (c) of Theorem~\ref{thm: arbitrary/balanced} follow as immediate consequences of a simple binomial coefficient test for $(p^2,d;p,p)$-admissibility:

\begin{theorem}
\label{thm:binomial} Let $d\geq 1$ be an integer.
\begin{compactenum}[(a)]
\item Let $i\in\{0,\ldots, d\}$, and let $n_1=d+i+1$ and $n_2=2d-i+1$. If $\binom{d}{i}$ is odd then any continuous map $f\colon \Delta_{n_1}\times\Delta_{n_2}\rightarrow \R^d$ admits a $4$-Tverberg family of type $(2,2)$. 

\item Let $p$ be an odd prime, let $i\in\{0,\ldots, d(p-1)/2\}$, and let $n_1=(p-1)(d+2i+1)$ and $n_2=(p-1)(pd-2i+1)$. If $\binom{d(p-1)/2}{i}\not\equiv 0 \pmod{p}$ then any continuous map $f\colon \Delta_{n_1}\times \Delta_{n_2}\rightarrow \R^d$ admits a $p^2$-Tverberg family of type $(p,p)$.
\end{compactenum}
\end{theorem}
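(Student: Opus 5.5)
The plan is to apply Proposition~\ref{prop:B-U} with $\ell=2$, $k=2$ and the partition $i_1=i_2=1$, and then conclude via Proposition~\ref{prop:admissibility}. In this setting $\mathbf{i}_1=1$ and $\mathbf{i}_2=2$, so the monomial condition simply asks that $m_1=N_1$ and $m_2=N_2$. Consequently everything comes down to computing one coefficient of the $\ell=2$ polynomial.

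\textbf{Part (a).} Take $p=2$. By \eqref{eqn:p=2} we have
\[h_{U(\Z_2^2,d)}(x_1,x_2)=(x_1^2x_2+x_1x_2^2)^d=x_1^dx_2^d(x_1+x_2)^d.\]
Its coefficient of $x_1^{d+i}x_2^{2d-i}$ is $\binom{d}{i}$ mod $2$.

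With $n_1=d+i+1$ and $n_2=2d-i+1$ we get $N_1=n_1-1=d+i$ and $N_2=n_2-1=2d-i$. Then $N_1+N_2=3d=\dim U(\Z_2^2,d)$, which matches the dimension requirement of Proposition~\ref{prop:B-U}(a).

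For Proposition~\ref{prop:admissibility} we need $n_1+n_2=3d+2=t(4,d;2,2)$, which holds, and $n_j\ge d+1$, which holds since $0\le i\le d$. So if $\binom{d}{i}$ is odd, every equivariant map has a zero, and the pair is admissible.

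\textbf{Part (b).} Take $p$ odd and set $e=d(p-1)/2$. By \eqref{eqn: p odd}, $h_{U_A(\Z_p^2,d)}$ is a nonzero constant multiple of
\[(y_1^py_2-y_1y_2^p)^{e}=y_1^{e}y_2^{e}(y_1^{p-1}-y_2^{p-1})^{e}.\]
The coefficient of $y_1^{e+(p-1)i}\,y_2^{e+(p-1)(e-i)}$ is $\pm\binom{e}{i}$.

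Now check the exponents against $N_1$ and $N_2$:
\[N_1=\tfrac{n_1-p+1}{2}=\tfrac{(p-1)(d+2i)}{2}=e+(p-1)i,\]
\[N_2=\tfrac{(p-1)(pd-2i)}{2}=\tfrac{d(p-1)}{2}+(p-1)\Bigl(\tfrac{(p-1)d}{2}-i\Bigr)=e+(p-1)(e-i).\]
Both match. The evenness hypothesis of Proposition~\ref{prop:B-U}(b) holds because $n_j$ is a multiple of $p-1$, which is even. The dimension condition is $N_1+N_2=2e+(p-1)e=e(p+1)=d(p^2-1)/2=\dim_\C U_A(\Z_p^2,d)$.

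Finally, $n_1+n_2=(p-1)(pd+d+2)=d(p^2-1)+2(p-1)=t(p^2,d;p,p)$. Also $n_j\ge(p-1)(d+1)$, since $i\ge0$ and $2i\le(p-1)d$. So Propositions~\ref{prop:B-U}(b) and~\ref{prop:admissibility} give the result whenever $\binom{e}{i}\not\equiv0\pmod p$.

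The only real work is the bookkeeping that the exponents of the monomial match $N_1$ and $N_2$. The main point of care is the odd-$p$ case: there we must confirm that the constant factor in \eqref{eqn: p odd} is nonzero mod $p$, which it is, being a power of $((p-1)/2)!$.
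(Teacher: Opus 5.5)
Your proposal is correct and takes the paper's own route. You specialize Proposition~\ref{prop:B-U} to $\ell=2$, $i_1=i_2=1$, read off $\binom{d}{i}$ (resp.\ $\pm\binom{d(p-1)/2}{i}$) as the coefficient of the matching monomial in $(x_1^2x_2+x_1x_2^2)^d$ (resp.\ $(y_1^py_2-y_1y_2^p)^{d(p-1)/2}$), and conclude via Proposition~\ref{prop:admissibility}. Your explicit checks of the sum condition, the bounds $n_j\ge (p-1)(d+1)$, the evenness of the $n_j$, and the nonvanishing mod $p$ of the constant factor are details the paper leaves implicit.
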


\begin{proof}[Proof of Theorem~\ref{thm:binomial}] We first consider $4$-Tverberg families of type $(2,2)$. For given $i\in\{0,\ldots, d\}$ we let  $n_1=d+i+1$ and $n_2=2d-i+1$. We have $h_{U(\Z_2^2,d)}(x_1,x_2)=(x_1^2x_2+x_2^2x_1)^d=\sum_{k=0}^d\binom{d}{k} x_1^{d+k}x_2^{2d-k}$, so Propositions~\ref{prop:B-U}(a) and ~\ref{prop:admissibility} show that $(n_1,n_2)$ is $(4,d;2,2)$-Tverberg admissible provided $\binom{d}{i}=1$ in $\Z_2$. 

The proof when $p$ is an odd prime is analogous. Let $a=d(p-1)/2$. We have that $h_{U_A(\Z_p^2,d)}(y_1,y_2)$ is a non-zero multiple of 
$h(y_1,y_2)=(y_1^py_2-y_2^py_1)^a=\sum_{k=0}^a\binom{a}{k}(-1)^{a-k}y_1^{a+k(p-1)}y_2^{pa-k(p-1)}$. Letting $i\in\{0,\ldots,a\}$, an application of Proposition~\ref{prop:B-U}(b) together with Proposition~ \ref{prop:admissibility} then gives the admissibility of $(n_1,n_2)$ with $n_1=2a+2i(p-1)+p-1=(p-1)(d+2i+1)$ and $n_2=2pa-2i(p-1)+p-1=(p-1)(pd-2i+1)$, provided $\binom{d(p-1)/2}{i}\neq 0$ in $\Z_p$.
\end{proof}

We now prove Theorem~\ref{thm: arbitrary/balanced}(a)(c)(d). 

\begin{proof}[Proof of Theorem~\ref{thm: arbitrary/balanced}(a)(c)(d)]
For parts (a) and (c), recall that  if $a=a_jp^j+\cdots+a_1p+a_0$ and $b=b_jp^j+\cdots+b_1p+b_0$ are the $p$-ary expansions of two integers $a,b\geq 1$, then Lucas's theorem (see, e.g, ~\cite{Fi47}) guarantees that $\binom{a}{b}\equiv \prod_{k=0}^j\binom{a_k}{b_k}\pmod{p}$. Now suppose that $d=2^{s+1}-1$ for some integer $s\geq 0$. Its binary expansion is then $d=2^s+2^{s-1}+\cdots+1$, so given any $i=i_s 2^s+\cdots +i_0\in \{0,\ldots, d\}$ we have $\binom{d}{i}=\prod_{k=0}^s \binom{1}{i_k}\neq 0$ in $\Z_2$. Thus $(d+i+1,2d-i+1)$ is $(4,d;2,2)$-Tverberg admissible by Theorem~\ref{thm:binomial}. For Theorem~\ref{thm: arbitrary/balanced}(c), suppose similarly that $d(p-1)/2=a_jp^j+\cdots+a_1p+a_0$, where each coefficient is even (and so $d(p-1)/2$ is itself even). To show $(n,n)$-admissibility with $n=d(p^2-1)/2+p-1$, by Theorem~\ref{thm:binomial} it is enough to show that $\binom{d(p-1)/2}{i}$ is non-zero with $i=d(p-1)/4$. But  $\binom{d(p-1)/2}{d(p-1)/4}=\prod_{k=0}^j \binom{a_k}{a_k/2}\neq 0$, so $(n,n)$ is $(p^2,d;p,p)$-Tverberg admissible. 

Finally, for part (d) we have by assumption that $d(p-1)/2=2p^s$ where $s\geq 0$ is an integer. Thus either $p=3$ and $d=2\cdot 3^s$, or else $p=5$ and $d=5^s$, and we observe that $d(p^{2\ell}-1)/2$ is even in both cases (if $p=5$, then $p\equiv 1\pmod{4})$. We have $n_1=n_2=d(p^{2\ell}-1)/2+p^\ell-1$. To use Proposition~\ref{prop:B-U}(b), we let $N_1=N_2=d(p^{2\ell}-1)/4$. We only need to show that the coefficient $c$ of the monomial \[y_1^{p^s\cdot (p^{2\ell-1}+1)}\cdots y_\ell^{p^s\cdot (p^\ell+p^{\ell-1})} y_{\ell+1}^{p^s\cdot (p^\ell+p^{\ell-1})}\cdots y_{2\ell}^{p^s\cdot (p^{2\ell-1}+1)}\] in 
\begin{align*} h(y_1,\ldots, y_{2\ell})&=(\sum_{\sigma\in \mathfrak{S}_{2\ell}}\sgn(\sigma)y_{\sigma(1)}^{p^{2\ell-1}}\cdots y_{\sigma(2\ell)})^{2p^s}\\&=(\sum_{\sigma\in \mathfrak{S}_{2\ell}} \sgn(\sigma)y_{\sigma(1)}^{p^s\cdot p^{2\ell-1}}\cdots y_{\sigma(2\ell)}^{p^s})\cdot (\sum_{\tau\in \mathfrak{S}_{2\ell}} \sgn(\tau)y_{\tau(1)}^{p^s\cdot p^{2\ell-1}}\cdots y_{\tau(2\ell)}^{p^s})\end{align*} is non-zero. To that end, for each $i\in [2\ell]$ let $j=2\ell-i+1$. By uniqueness of $p$-ary expansions, the monomial considered occurs in $h(y_1,\ldots, y_{2\ell})$ precisely for those $\sigma,\tau\in \mathfrak{S}_{2\ell}$ such that for every $i\in [2\ell]$ one has that (1) $\sigma(i)=i=\tau(j)$ and $\sigma(j)=j=\tau(i)$, or else that (2) $\sigma(i)=j=\tau(j)$ and $\sigma(j)=i=\tau(i)$. Thus the $\sigma$ and $\tau$ which produce the desired monomial satisfy, for every $i\in [2\ell]$, that $\sigma$ is the identity on $\{i,j\}$ while $\tau$ acts as a transposition on $\{i,j\}$, or else vice versa. Hence there are exactly $2^\ell$ possibilities for $\sigma$ and $\tau$. Moreover, for each such pair one has $\sgn(\tau)=(-1)^\ell\sgn(\sigma)$. Thus $\sgn(\sigma)\sgn(\tau)=(-1)^\ell$ in each case, and so  $c=(-1)^\ell2^\ell\neq 0$ in $\Z_p$. This completes the proof of Theorem~\ref{thm: arbitrary/balanced}(d). 
\end{proof}

We conclude this section with a proof of the van Kampen--Flores extensions of all of our results, except that of Theorem~\ref{thm: arbitrary/balanced}(b).

\begin{proof}[Proof of Theorem~\ref{thm:vK-F product}]

The cases of Theorem~\ref{thm:vK-F product}, except that corresponding to Theorem~\ref{thm: arbitrary/balanced}(b), all follow easily from the calculations of this section. When $p=2$, we have proved $(2^\ell,d;2,\ldots, 2)$-Tverberg admissibility of an $\ell$-tuple $(n_1,\ldots, n_\ell)$ owing to the fact that the coefficient of $x_1^{n_1-1}\cdots x_\ell^{n_\ell-1}$ in $h_{U(\Z_2^\ell,d)}(x_1,\ldots, x_\ell)$ is equal to one. The coefficient of $x_1^{n_1}\cdots x_\ell^{n_\ell}$ in $x_1\cdots x_\ell h_{U(\Z_2^\ell,d)}(x_1,\ldots, x_\ell)$ is therefore one as well. So by Proposition~\ref{prop:B-U}(a) and Proposition~\ref{prop:VK-F product} one has the desired $2^\ell$-Tverberg family for any map $f\colon \Delta_{n_1+1}\times\cdots\times \Delta_{n_\ell+1}\rightarrow \R^d$.

Again the proof for odd primes $p$ is analogous. Now we have the $(p^\ell,d;p,\ldots,p)$-Tverberg admissibility of $(n_1,\ldots, n_\ell)$, where the $n_j$ are all even, by having shown that the coefficient of $y_1^{N_1}\cdots y_\ell^{N_\ell}$ in $h_{U_A(\Z_p^\ell,d)}(y_1,\ldots, y_\ell)$ is non-zero, where we recall that $N_j=\frac{n_j-p+1}{2}$ for all $j\in [\ell]$. But this coefficient is the same as that of $y_1^{N_1+(p-1)/2}\cdots y_\ell^{N_\ell+(p-1)/2}$ in $y_1^{(p-1)/2}\cdots y_\ell^{(p-1)/2}h_{U_A(\Z_p^\ell,d)}(y_1,\ldots, y_\ell)$, so as before Propositions~\ref{prop:B-U}(b) and ~\ref{prop:VK-F product} complete the proof.
\end{proof}

\section{$4$-Tverberg Families of Type $(2,2)$ in dimensions a power of two} 
\label{sec:join}

We conclude with our proof of the existence of balanced $(4,d;2,2)$-Tverberg admissible pairs when $d$ is a power of two, as well as the van Kampen--Flores variant of that result. Our proofs here reduce to a known $D_8$-equivariance statement~\cite{FS24} which is given for products of \emph{deleted joins} rather than deleted products. We shall therefore convert the configuration-space/test-map scheme of Section~\ref{sec:CS/TM} to match this scenario.

\subsection{Configuration Space} Let $n\geq 1$ be an integer. Viewing $\Delta_n$ as an abstract simplicial complex, the \emph{deleted join} $(\Delta_n)^{\ast 2}_\Delta=\{\sigma\ast\tau\mid \sigma,\tau\in \Delta_n\,\, \text{and}\,\, \sigma\cap \tau=\emptyset\}$ is the abstract simplicial complex consisting of all joins of disjoint faces of $\Delta_n$, including the possibility of empty faces. Viewed as a topological space, each point $\mathbf{x}=\lambda_1x_1\oplus \lambda_2x_2$ of $(\Delta_n)^{\ast 2}_\Delta$ can be represented as a formal convex sum, where $x_1,x_2\in \Delta_n$ lie in disjoint faces and $\lambda_1,\lambda_2\geq 0$ satisfy $\lambda_1+\lambda_2=1$. One then has a free $\Z_2$-action on $(\Delta_n)^{\ast 2}_\Delta$ generated by swapping $\lambda_1x_1$ and $\lambda_2x_2$. The deleted product $(\Delta_n)^{\times 2}_\Delta$ may be identified with the invariant subspace of the deleted join consisting of all elements with $\lambda_1=\lambda_2=1/2$. While we shall not need it in what follows, we note that there is a natural equivariant identification of $(\Delta_n)^{\ast 2}_\Delta$ with the ordinary join $\Z_2^{\ast (n+1)}$ of the group $\Z_2$ under the diagonal $\Z_2$-action (see, e.g.,~\cite{Ma08}). As the latter is the boundary of a $(n+1)$-dimensional cross-polytope, $(\Delta_n)^{\ast 2}_\Delta$ may therefore be identified with the $n$-dimensional sphere $S^n$ equipped with the standard antipodal action. 

Considering the product $(\Delta_n)^{\ast 2}_\Delta\times (\Delta_n)^{\ast 2}_\Delta$ of deleted joins, as before one has a free $\Z_2^2$-action by letting each $\Z_2$-factor of $\Z_2^2$ act independently on each corresponding join factor. This action can be extended to a non-free action of the Dihedral group $D_8=(\Z_2\times \Z_2)\rtimes \mathfrak{S}_2$ given by permuting the two deleted join copies. Explicitly, if $\mathbf{x}=(\mathbf{x}_1,\mathbf{x}_2)\in (\Delta_n)^{\ast 2}_\Delta \times (\Delta_n)^{\ast 2}_\Delta$, then one sets $(g,\varphi)\cdot \mathbf{x}=(g_1\cdot \mathbf{x}_{\varphi(1)}, g_2\cdot \mathbf{x}_{\varphi(2)})$ for each $g=(g_1,g_2)\in \Z_2^2$ and $\varphi\in \mathfrak{S}_2$. The restriction of this action to the product of deleted products gives a $D_8$-action on $(\Delta_n)^{\times 2}_\Delta\times (\Delta_n)^{\times 2}_\Delta$, which is again non-free. 

\subsection{Test Spaces} We now consider the $\Z_2^2$-representations of Section~\ref{sec:CS/TM}, which may be viewed as $D_8$-representations as follows. First, the permutative $\mathfrak{S}_2$-action on $\Z_2\oplus \Z_2$ extends to a $D_8$-action given by $(h,\varphi)\cdot g=h+\varphi\cdot g$ for each $h=(h_1,h_2), g=(g_1,g_2)\in \Z_2^2$ and $\varphi\in \mathfrak{S}_2$. Letting $(h,\varphi)\cdot \sum_{g\in \Z_2^2} x_g\, g= \sum_{g\in \Z_2^2}x_{h+\varphi\cdot g}\, g$ for each $\sum_{g\in \Z_2^2} x_g\,g\in U(\Z_2^2,d)$ then gives a $D_8$-action on $U(\Z_2^2,d)$. Secondly, for each $m\geq 1$ let $V_m:=[U(\Z_2)\oplus U(\Z_2)]^{\oplus m}$. As before, there is a $\Z_2^2$-representation on $U(\Z_2)\oplus U(\Z_2)$ by letting each $\Z_2$-factor act independently on each $U(\Z_2)$-factor, and this extends to a $D_8$-action by including transposition of the two coordinates. One then has a diagonal $D_8$-action on $V_m$.   

\subsection{Test Map} The following Borsuk-Ulam type statement for $D_8$-actions was given over the course of Sections 5 and 6 of~\cite{FS24}.

\begin{proposition}
\label{prop:D_8}
Let $s\geq 1$ and $m\geq 0$ be integers, let $d=2^s+t$ where $t\in \{0,1\}$, and let $n=m+\lceil\frac{3d}{2}\rceil+1$. Then any $D_8$-equivariant map $F\colon (\Delta_n)^{\ast 2}_\Delta\times (\Delta_n)^{\ast 2}_\Delta \rightarrow U(\Z_2^2,d)\oplus V_{m+1}$ has a zero.
\end{proposition}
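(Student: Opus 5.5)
The plan is to run the degree/obstruction argument of Blagojević--Frick--Haase--Ziegler for the Grünbaum--Hadwiger--Ramos problem, adapted to the $D_8$-space $X_n:=(\Delta_n)^{\ast 2}_\Delta\times (\Delta_n)^{\ast 2}_\Delta\cong S^n\times S^n$. Here $D_8$ acts by independent antipodal maps on the two factors together with the factor swap.

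\emph{Why $\Z_2^2$ alone is not enough.} Write $W:=U(\Z_2^2,d)\oplus V_{m+1}$. Then
\[\dim W=3d+2(m+1),\qquad \dim X_n=2n=2m+2\lceil\tfrac{3d}{2}\rceil+2 .\]
So for $t=0$ we have $\dim W=\dim X_n$, and for $t=1$ we have $\dim W=\dim X_n-1$. Restricted to $\Z_2^2$, the relevant top Stiefel--Whitney class is
\[(x_1x_2)^{m+1}(x_1^2x_2+x_1x_2^2)^d .\]
The monomials $x_1^ax_2^b$ that the Künneth argument of Section~\ref{sec:B-U Proof} detects on $S^n\times S^n$ have $a,b\leq n$. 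For $t=0$ the only such monomial in this degree is $x_1^nx_2^n$. Its coefficient is $\binom{d}{d/2}$, which is even when $d=2^s$. So the free $\Z_2^2$-argument fails, and the swap symmetry has to be used genuinely.

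\emph{Step 1: localize the obstruction.} Consider the singular set $S\subset X_n$ of points with nontrivial isotropy. Since the $\Z_2^2$-part acts freely, $S$ is the set of points fixed by $(g,\varphi)$ with $\varphi$ the transposition and $g_1+g_2=0$. This is a copy of the (twisted) diagonal $\{(\mathbf{x},\pm\mathbf{x})\}\cong S^n$.
\begin{compactenum}[(i)]
\item Compute the fixed subspace $W^{H}$ for each such isotropy group $H$. Its dimension should be roughly $\tfrac12\dim W+O(1)$, which is below $n$ only barely.
\item Show that a nonvanishing equivariant map on $S$ exists and is unique up to equivariant homotopy. Concretely, the relevant $W^H$-sphere must have dimension $\geq n$, so that an extension over $S$ exists and homotopies between any two such extensions exist.
\end{compactenum}
It then follows that every equivariant map $X_n\to W$ that is nonvanishing on $S$ is comparable to a fixed reference map on $S$. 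The primary obstruction to extending without zeros lives in
\[\mathcal{H}^{2n}_{D_8}\bigl(X_n,S;\Z_{W}\bigr),\]
with coefficients twisted by the orientation character of $W$. This group is $\Z$ or $\Z_2$, according as $2n=\dim W$ or $2n=\dim W+1$; the latter is the case $t=1$, and there one works with the first obstruction in the $\Z_2$-group instead.

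\emph{Step 2: compute the obstruction with a test map.} The class is represented by the signed (or mod $2$) count of $D_8$-orbits of zeros of any equivariant map that is generic, transverse to $0$, and has no zeros on $S$. I would take an explicit test map coming from a concrete point configuration: the vertices of the $\Delta_n$ factors placed on a moment curve. Equivalently, I would take the GHR-type test map for two hyperplanes equipartitioning $d$ intervals on the moment curve, which is exactly what the $(x_1^2x_2+x_1x_2^2)^d$ structure encodes. Zeros then correspond to pairs of ``cut patterns'' on the curve and can be enumerated combinatorially, as in the BFHZ counting of equipartitions of intervals on the moment curve. For $d=2^s$ (and $d=2^s+1$, where the extra dimension of $X_n$ is handled by working with the $\Z_2$-obstruction), the goal is to show the number of orbits is odd, i.e.\ $\binom{d}{d/2}/2$-type counts modulo the swap. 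For example, $\binom{2^s}{2^{s-1}}/2=\binom{2^s-1}{2^{s-1}-1}$ is odd, which is precisely the phenomenon that makes $d$ a power of two special.

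\emph{Main obstacle.} The expected hard step is Step 2: verifying transversality of the explicit test map and proving oddness of the orbit count, including the $t=1$ case. I would handle that case by reducing to $d=2^s$, adding one interval and tracking how the zeros split. Step 1 is routine bookkeeping of fixed-point dimensions, but it must be checked carefully, since the margin for the extra summand $V_{m+1}$ is exactly what makes $n=m+\lceil 3d/2\rceil+1$ the right threshold.
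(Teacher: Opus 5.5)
\textbf{Verdict: there is a genuine gap.} The paper gives no proof of this proposition. It imports it from Sections 5--6 of \cite{FS24}, a relative-obstruction/degree computation in the style of \cite{BFHZ16}, and only remarks that $m=0$ goes through. Your outline has that shape, but the bookkeeping you do carry out is wrong in places that matter, and the part that carries the content is not done.

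\textbf{Step 1 fails at $d=2$.} Take $H=\langle((0,0),\tau)\rangle$. Then $\dim U(\Z_2^2)^H=2$ and $\dim V_1^H=1$, so $\dim W^H=2d+m+1=n+\lfloor d/2\rfloor$. This is above $n$, not below it. (Also, $S$ is two disjoint spheres, the diagonal and the antidiagonal, interchanged by $((1,0),\mathrm{id})$.)
\begin{itemize}
\item Existence of a nonvanishing map on the $n$-dimensional set $S$ only needs $S(W^H)$ to be $(n-1)$-connected.
\item Uniqueness up to equivariant homotopy needs $S(W^H)$ to be $n$-connected, i.e.\ $\lfloor d/2\rfloor\ge 2$. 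This fails for $d=2$. (For $d=3$ you can bypass it, since $\binom{3}{2}$ is odd and the $\Z_2^2$-class already works.)
\end{itemize}
At $d=2$ the failure is not cosmetic.
\begin{itemize}
\item $\dim S(W^H)=n$, and equivariant maps on the diagonal have degrees running through all even integers.
\item A generic equivariant homotopy between two classes whose degrees differ by $2$ has an odd number of orbits of zeros on $S\times I$.
\item Each such zero is an endpoint of a curve of zeros in $(X_n\times I)/D_8$. On the normal slice the map is an odd map from the $n$-dimensional $(-1)$-eigenspace of $\tau$ to the $(n-1)$-dimensional $(-1)$-eigenspace in $W$. Generically its zero set is one line, folded in half by $H$.
\item $\det_X\cdot\det_W$ is nontrivial on $\tau$ when $d=2$. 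So the obstruction group is $\Z_2$ and the obstruction is exactly this orbit parity.
\end{itemize}
Hence the obstruction depends on the class of $F|_S$, and your Step 1(ii) is false here. One test-map count tells you nothing about an arbitrary $F$. In fact some class on $S$ then has vanishing obstruction, so at $d=2$ one must prescribe $F|_S$ up to homotopy, as in the BFHZ-type formulations.

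\textbf{Your mechanism does not apply for $t=1$.} Here $\dim S(W)=2n-2$, so a generic equivariant map has a one-dimensional zero set and there is no orbit count to take. The primary obstruction lives in $H^{2n-1}_{D_8}(X_n,S;\pi_{2n-2}(S^{2n-2}))$. The $\Z_2=\pi_{2n-1}(S^{2n-2})$-valued class in degree $2n$ is a secondary obstruction. It is defined only once the primary one vanishes, and only modulo the indeterminacy from the choice on the $(2n-1)$-skeleton. ``Adding an interval and tracking how zeros split'' addresses neither issue.

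\textbf{Step 2 is not carried out, even for $t=0$, $d\ge 4$.} Step 1 does hold in this case, so everything rests on Step 2, which you leave open. You would need an explicit $D_8$-map with the following properties:
\begin{itemize}
\item its $V_{m+1}$-component cuts $S^n=S^{3d/2}\ast S^m$ down to the GHR sphere $S^{3d/2}$;
\item it is nonzero on $S$ and transverse to $0$;
\item its zeros are exactly the $\binom{d}{d/2}$ $\Z_2^2$-orbits obtained by choosing which $d/2$ intervals receive the middle cut from the first hyperplane.
\end{itemize}
These zeros form $\binom{d-1}{d/2-1}$ $D_8$-orbits, which is odd. Your count is the right prediction, but it is not yet an argument.

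A minor point: for $t=0$ the obstruction group is $\Z_2$, not $\Z$, since $\det_X\cdot\det_W$ is also nontrivial on $((1,0),\mathrm{id})$ when $4\mid d$. This is harmless, because you only need an odd count.
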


\begin{remark}
\label{rem:m=0}
Owing to its use in a Tverberg-type theorem for maps from simplices to $\R^m$, Proposition~\ref{prop:D_8} is stated in ~\cite{FS24} only when $m\geq 1$. One may easily confirm that the proofs given there carry through without change to the $m=0$ setting, however. \end{remark}

We now prove Theorem~\ref{thm: arbitrary/balanced}(b), together with its van Kampen--Flores type extension. 

\begin{proof} Let $d\geq 2$ be a power of two and let $n=\frac{3d}{2}+1$. Given a continuous map $f\colon \Delta_n\times \Delta_n\rightarrow \R^d$, we have previously constructed a $\Z_2^2$-equivariant map $F\colon (\Delta_n)^{\times 2}_\Delta \times (\Delta_n)^{\times 2}_\Delta \rightarrow U(\Z_2^2,d)$, a zero of which guarantees that the map $f$ admits a $4$-Tverberg family of type $(2,2)$. It is now easily confirmed that this map is in fact $D_8$-equivariant with respect to the actions considered above. In order to use Proposition~\ref{prop:D_8}, we extend the map $F$ to a $D_8$-equivariant map $\widetilde{F}\colon (\Delta_n)^{\ast 2}_\Delta \times (\Delta_n)^{\ast 2}_\Delta\rightarrow U(\Z_2^2,d)\oplus V_1$ whose zeros coincide with those of $F$. This is done by setting

\begin{align*} \widetilde{F}(\lambda_1x_1\oplus\lambda_2x_2,\mu_1y_1\oplus\mu_2y_2)=&\lambda_1\lambda_2\mu_1\mu_2 F((x_1,x_2),(y_1,y_2))
												 \oplus ((\lambda_1-1/2)\cdot 0+(\lambda_2-1/2)\cdot 1)\\ &\oplus ((\mu_1-1/2)\cdot 0 +(\mu_2-1/2)\cdot 1)\end{align*} for each $\lambda_1x_1\oplus \lambda_2x_2,\mu_1y_1\oplus \mu_2y_2\in (\Delta_n)^{\ast 2}_\Delta$. This map has a zero by Proposition~\ref{prop:D_8}, and therefore $F$ does as well. This concludes the proof of Theorem~\ref{thm: arbitrary/balanced}(b).

The proof of the van Kampen--Flores extension of Theorem~\ref{thm: arbitrary/balanced}(b) is analogous. Given a map $f\colon \Delta_{n+1}\times \Delta_{n+1}\rightarrow \R^d$, the map $F\oplus D\colon (\Delta_{n+1})^{\times 2}_\Delta\times (\Delta_{n+1})^{\times 2}_\Delta\rightarrow U(\Z_2^2,d)\oplus V_1$ constructed in Section~\ref{sec:CS/TM} is again $D_8$-equivariant. One may extend this map to a $D_8$-equivariant map $\widetilde{F\oplus D}\colon (\Delta_{n+1})^{\ast 2}_\Delta \times (\Delta_{n+1})^{\ast 2}_\Delta\rightarrow U(\Z_2^2,d)\oplus V_2$ in the same way as before. This map has a zero by Proposition~\ref{prop:D_8}, which completes the proof because the zero sets of $\widetilde{F\oplus D}$ and $F\oplus D$ coincide.\end{proof}

\begin{remark} 
\label{rem:d=2^s+1} As Proposition~\ref{prop:D_8} holds for $d$ of the form $d=2^s+1$ for some integer $s\geq 2$, the same proof as above shows that when $n=\lceil\frac{3d}{2}\rceil+1$ any continuous map $f\colon \Delta_n\times\Delta_n\rightarrow \R^d$ admits a $4$-Tverberg family of type $(2,2)$. Likewise, any continuous map $f\colon \Delta_{n+1}\times \Delta_{n+1}\rightarrow \R^d$ admits a $4$-Tverberg family $\{\sigma_{i_1}^1\times \sigma_{i_2}^2\}_{i_1,i_2\in[2]}$ of type $(2,2)$ with $\dim(\sigma^j_i)\leq \lfloor\frac{n}{2}\rfloor$ for all $i,j\in [2]$. 
\end{remark}

\end{document}